\renewcommand{\baselinestretch}{1.25}
\newtheorem{theorem}{Theorem}[section]
\newtheorem{corollary}[theorem]{Corollary}
\newtheorem{lemma}[theorem]{Lemma}
\newtheorem{proposition}[theorem]{Proposition}
\newtheorem{problem}[theorem]{Problem}
\theoremstyle{definition}
\theoremstyle{remark}
\newtheorem{remark}[theorem]{Remark}
\DeclareMathOperator{\bipyr}{bipyr}
\DeclareMathOperator{\aff}{aff}
\DeclareMathOperator{\conv}{conv}
\DeclareMathOperator{\ver}{vert}
\DeclareMathOperator{\aof}{AOF}
\crefname{remark}{Remark}{Remarks}
\crefname{rmk}{Remark}{Remarks}
\crefname{problem}{Problem}{Problems}
\date{\today}
\title{On the  reconstruction of polytopes}
\author{Joseph Doolittle}
\address{College of Liberal Arts and Sciences, University of Kansas, Lawrence, KS, USA}
\email{\texttt{jdoolitt@ku.edu}}
\author{Eran Nevo}
\thanks{Research of E.~Nevo was partially supported by Israel Science Foundation grant ISF-1695/15.}
\address{Institute of Mathematics, Hebrew University of Jerusalem}
\email{\texttt{nevo@math.huji.ac.il}}
\author{Guillermo Pineda-Villavicencio}
\thanks{Research of Pineda-Villavicencio was partly supported by the Indonesian government Scheme P3MI, Grant No. 1016/I1.C01/PL/2017.}
\address{Centre for Informatics and Applied Optimisation, Federation University Australia}
\email{\texttt{work@guillermo.com.au}}
\author{Julien Ugon}
\thanks{Research of Ugon was supported by ARC discovery project DP180100602.}
\address{School of Information Technology, Deakin University}
\email{\texttt{julien.ugon@deakin.edu.au}}
\author{David Yost}
\address{Centre for Informatics and Applied Optimisation, Federation University Australia}
\email{\texttt{d.yost@federation.edu.au}}
\keywords{$k$-skeleton, reconstruction, simple polytope}
\subjclass[2010]{Primary 52B05; Secondary 52B12}
\begin{document}
\maketitle
\begin{abstract}
Blind and Mani, and later Kalai, showed that the face lattice of a simple polytope is determined by its graph, namely its $1$-skeleton. Call a vertex of a $d$-polytope \emph{nonsimple} if the number of edges incident to it is more than $d$.
We show that (1) the face lattice of any $d$-polytope with at most two nonsimple vertices is determined by its $1$-skeleton; (2) the face lattice of any $d$-polytope with at most $d-2$ nonsimple vertices is determined by its $2$-skeleton; and (3) for any $d>3$ there are two $d$-polytopes with $d-1$ nonsimple vertices, isomorphic $(d-3)$-skeleta and nonisomorphic face lattices. In particular, the result (1) is best possible for $4$-polytopes.
\end{abstract}

\section{Introduction}
\label{sec:intro}
We say that a $d$-polytope $P$ is \emph{reconstructible} from its $k$-skeleton if the restriction of its 
face lattice to the faces of dimension
at most $k$ determines the entire face lattice of $P$. It easily follows from a generalisation of Jordan's separation theorem\footnote{Every subset of the $d$-sphere which is a homeomorphic image of the $(d-1)$-sphere divides the $d$-sphere into two connected components.} that any $d$-polytope is reconstructible from its $(d-2)$-skeleton~\cite[Thm.~12.3.1]{Gru03}. This is tight: for any $d\geq 4$ Perles found $d$-polytopes which are not combinatorially isomorphic but have isomorphic $(d-3)$-skeleta~\cite[Sec.~12.3]{Gru03}.
Call a vertex in a $d$-polytope \emph{nonsimple} if the number of edges incident to it is more than $d$; call it {\it simple} otherwise.
Note that the $d$-bipyramid and the pyramid over the $(d-1)$-bipyramid form an example of such a pair with exactly $d$ nonsimple vertices in each.
The two polytopes in \cref{fig:nonrecontructible} correspond to the case $d=4$.

\begin{figure}
\begin{center}
\includegraphics[scale=0.8]{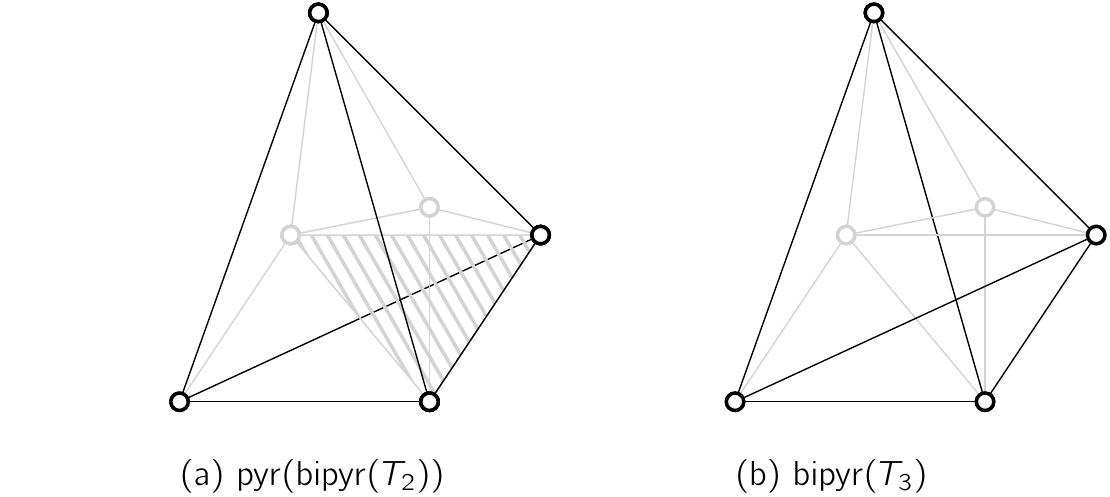}
\end{center}
\caption{A pair of 4-polytopes with four nonsimple vertices which are nonreconstructible from their graphs. The missing 2-face in the bipyramid $\bipyr(T_2)$ is highlighted.}
\label{fig:nonrecontructible}
\end{figure}

For $1\leq k\leq d-3$, let $\beta_{k,d}$ denote the maximum number $j$ such that any $d$-polytope with at most $j$ nonsimple vertices is reconstructible from its $k$-skeleton.
The result of Blind and Mani~\cite{BliMan87}, later proved through a brilliant argument by Kalai~\cite{Kal88}, asserts $0\leq \beta_{1,d}$. Combined with the above example, the following is known for $d\geq 4$:
\[0\leq \beta_{1,d} \leq \beta_{2,d}\leq\ldots\leq \beta_{d-3,d}\leq d-1.\]

We obtain the following result.
\begin{theorem}[Main Theorem]
For any $d\geq 4$,

(1) $\beta_{2,d}= \ldots =\beta_{d-3,d}= d-2$, and

(2) $2\leq \beta_{1,d}\leq d-2$, in particular $\beta_{1,4}=2$.
\end{theorem}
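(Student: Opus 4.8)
The plan is to derive the Main Theorem from three separate assertions and then close all gaps with the monotonicity chain $0\le\beta_{1,d}\le\beta_{2,d}\le\cdots\le\beta_{d-3,d}\le d-1$ recorded above. I would isolate: (A) every $d$-polytope with at most $d-2$ nonsimple vertices is reconstructible from its $2$-skeleton, so $\beta_{2,d}\ge d-2$; (B) a pair of $d$-polytopes with exactly $d-1$ nonsimple vertices, isomorphic $(d-3)$-skeleta and nonisomorphic face lattices, so $\beta_{d-3,d}\le d-2$; and (C) every $d$-polytope with at most two nonsimple vertices is reconstructible from its graph, so $\beta_{1,d}\ge 2$. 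Granting these, part~(1) is immediate for $d\ge 5$, since the chain then reads $d-2\le\beta_{2,d}\le\cdots\le\beta_{d-3,d}\le d-2$ and collapses to equality (for $d=4$ the index range is empty and part~(1) is vacuous). For part~(2), (C) gives $\beta_{1,d}\ge 2$; the upper bound is $\beta_{1,d}\le\beta_{2,d}=d-2$ for $d\ge 5$ by part~(1), while for $d=4$ it comes straight from (B), whose $(d-3)$-skeleton is the graph and whose $d-1=3$ nonsimple vertices force $\beta_{1,4}\le 2$; hence $\beta_{1,4}=2$.

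For (B) I would modify the pair already flagged in the introduction---the $d$-bipyramid $\bipyr(T_{d-1})$ and the pyramid over the $(d-1)$-bipyramid---which realise $d$ nonsimple vertices with isomorphic $(d-3)$-skeleta. In the bipyramid the $d$ base vertices are nonsimple (each has degree $d+1$) and the two apexes are simple; the goal is to shed one nonsimple vertex while keeping the $(d-3)$-skeleton ambiguous and the face lattices distinct. I would attempt to replace the simplex base, or one base vertex, by a slightly less symmetric polytope so that exactly one former nonsimple vertex becomes simple, then verify that the $(d-3)$-skeleta of the modified pair still agree while the discrepancy persists in a top-dimensional face---the ``missing $2$-face'' phenomenon of \cref{fig:nonrecontructible}. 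Finding the right modified pair is a matter of design, but once guessed the verification is a bounded, if fiddly, combinatorial check.

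The substance lies in (A) and (C), and for both I would induct on dimension: knowing all faces of dimension $\le j$, recover the $(j{+}1)$-faces, so it suffices to detect $(j{+}1)$-faces from lower data. The engine is Kalai's theorem, which for simple polytopes gives a purely graph-theoretic rule recognising which induced subgraphs are graphs of faces, through good acyclic orientations---those in which every face has a unique sink. With at most $d-2$ (respectively two) nonsimple vertices, almost every vertex is simple, so I would run Kalai's detection on the simple part and patch across the few nonsimple vertices. In case (A) the explicitly given $2$-faces supply the local structure at each nonsimple vertex directly, and this is what lets the admissible count rise to $d-2$. In case (C), where only the graph is available, I would instead argue that two nonsimple vertices cannot be completed in two genuinely different ways to a full face lattice without forcing a third nonsimple vertex, so the graph already determines the faces.

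I expect the main obstacle to be (C). At a nonsimple vertex both the unique-sink property and the regularity of the induced subgraph that underlie Kalai's characterisation fail, and with only $1$-skeletal data there is no auxiliary information to restore them---unlike (A), where the given $2$-faces remove precisely this local ambiguity. Turning the intuition that ``two nonsimple vertices are too few to create two distinct face lattices'' into a proof, rather than a plausibility argument, is where the real difficulty sits; by comparison (A) is eased by its richer input and (B) reduces to a finite verification once the right polytopes are found.
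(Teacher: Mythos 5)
Your top-level reduction is exactly the paper's: the theorem does follow from (A) $\beta_{2,d}\ge d-2$, (B) $\beta_{d-3,d}\le d-2$, and (C) $\beta_{1,d}\ge 2$ together with the monotone chain, and your handling of the $d=4$ edge cases is correct. The problem is that each of (A), (B), (C) is left as a sketch whose proposed route either omits the decisive idea or points in a direction that is not known to work, so the proposal is closer to a restatement of the proof obligations than a proof. For (B), ``modify the bipyramid pair so that exactly one nonsimple vertex becomes simple'' is not carried out, and it is not a bounded verification once guessed: the paper's pair $Q^1_d,Q^2_d$ is built by an induction on $d$ using the beneath--beyond theorem (a pyramid over $Q^1_d$ followed by a convex hull with a point placed on the affine hull of a specific triangle), and $Q^2_d$ is obtained from $Q^1_d$ by sliding one vertex until two simplex facets merge into a bipyramid, creating the missing ridge. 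Producing a family that simultaneously has exactly $d-1$ nonsimple vertices, isomorphic $(d-3)$-skeleta and distinct face lattices in every dimension is the content of Section 2 and cannot be absorbed into ``a matter of design.''

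For (A), your proposed induction on skeleton dimension (recover $(j+1)$-faces from $j$-faces) is not how the paper proceeds and you give no mechanism for the inductive step. The paper goes directly from the $2$-skeleton to the facets: a simple $(d-1)$-frame determines a facet, Kaibel's frame-propagation rule (\cref{prop:Kaibel}) moves the frame along an edge between simple vertices using only a $2$-face, and -- this is where the bound $d-2$ enters and your sketch never locates it -- Balinski's theorem guarantees that at most $d-2$ nonsimple vertices cannot disconnect the $(d-1)$-connected graph of a facet, so the walk through simple vertices spans every facet. Without identifying that connectivity argument you have no reason the ``patching across nonsimple vertices'' terminates with the whole facet. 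For (C), you concede the difficulty, but the gap is not merely one of effort: the heuristic you propose (``two nonsimple vertices cannot be completed in two different ways without forcing a third'') is not the shape of any workable argument here. The paper instead constructs good orientations in which a prescribed face is initial and a second, disjoint prescribed face is final (\cref{lem:Orientation-F-Initial}, via a projective transformation making the two supporting hyperplanes parallel), partitions the facets into the four families $\mathcal{F}_{u-v},\mathcal{F}_{v-u},\mathcal{F}_{\emptyset},\mathcal{F}_{uv}$, and recognises each family in turn by minimising a tailored sink-counting functional over the corresponding restricted class of acyclic orientations, with \cref{lem:feasible-subgraphs} ruling out spurious feasible subgraphs. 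None of this machinery is present or replaceable by the plausibility argument you describe, so (C) remains unproved in your proposal.
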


Further, for any fixed $d$ in the above theorem, with exception of the reconstruction from the 1-skeleton of a polytope $P$ with two nonsimple vertices, the reconstruction of the face lattice of the relevant polytopes can be done in polynomial time in the number of vertices.

The proof of $d-2\leq \beta_{2,d}$, based on Kaibel's~\cite[Prop.~1]{Kai03}, is given in \cref{sec:2-skel}.
We give two proofs of $2\leq \beta_{1,d}$ based on a restriction of Kalai's good acyclic orientations~\cite{Kal88} to a subfamily with certain desired properties; see \cref{lem:Orientation-F-Initial}. In addition to this subfamily of orientations, the second proof uses truncation of polytopes to reduce to the easier assertion $1\leq \beta_{1,d}$.
The results on polynomial complexity follow Friedman~\cite{Fri09}.
Pairs of polytopes with $d-1$ nonsimple vertices showing $\beta_{d-3,d}\leq d-2$ are given in \cref{sec:construction}; these are constructed by induction on the dimension and include the pair in \cref{fig:nonrecontructible-3nonsimple}, found in the database by Miyata-Moriyama-Fukuda~\cite{FukMiyMor13}. Realisations of these polytopes are provided via a \texttt{polymake} program, available online at \cite{PinProgram} under the name of the paper.

\begin{figure} 
\begin{center}
\includegraphics{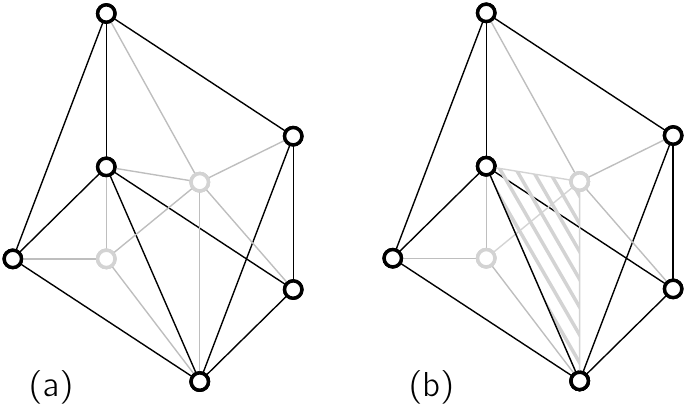}
\end{center}
\caption{A pair of 4-polytopes with three nonsimple vertices which are nonreconstructible from their graphs. The missing 2-face of a bipyramid over a simplex is highlighted in (b), while in (a) this bipyramid is split into two simplices. These polytopes form part of the database of 4-polytopes with 8 vertices by Miyata-Moriyama-Fukuda~\cite{FukMiyMor13}.}
\label{fig:nonrecontructible-3nonsimple}
\end{figure}

We still do not know the answer to the following problem.
\begin{problem}
Does $\beta_{1,d}<\beta_{2,d}$ for some $d\geq 5$?
\end{problem}

\section{Pairs of nonisomorphic $d$-polytopes with $d-1$ nonsimple vertices and isomorphic $(d-3)$-skeleton}
\label{sec:construction}

In this section, for every dimension $d\ge 4$ we construct pairs of nonisomorphic  $d$-polytopes with $d-1$ nonsimple vertices and isomorphic $(d-3)$-skeleta.

First, some terminology, following \cite[p.~241]{Zie95} (for undefined terminology on polytopes see e.g. the textbooks \cite{Gru03, Zie95}). Let $P\subset\mathbb{R}^d$ be a $d$-polytope and let $w$ be a point in  $\mathbb{R}^d\setminus P$. We say that a facet $F$ of $P$ is {\it visible} from the point $w$ with respect to a polytope $P$ in $\mathbb{R}^{d}$ if $w$ belongs to the open halfspace determined by $\aff F$ which is disjoint from $P$. We don't specify $P$ or $ \mathbb{R}^{d}$ when it is clear from the context. If instead $w$ belongs to  the open halfspace which contains the interior of $P$, we say that the facet $F$ is {\it nonvisible} from $w$.  Moreover, the point $w$ is {\it beyond} a face $G$ of $P$ if the facets of $P$ containing $G$ are precisely those that are visible from $w$.

Our construction relies on the following well-known theorem.
\begin{theorem}[{\cite[Thm.~5.2.1]{Gru03}}]
\label{thm:beneath-beyond} Let $P$ and $P'$ be two $d$-polytopes in $\mathbb{R}^d$, and let $v$ be a vertex of $P'$ such that $v\not\in P$ and $P'=\conv (P\cup \{v\})$. Then
\begin{enumerate}[(i)]
\item a face $F$ of $P$ is a face of $P'$ if and only if there exists a facet  of $P$ containing $F$ which is nonvisible from $v$;

\item if $F$ is a face of $P$ then $F':=\conv(F\cup\{v\})$ is a face of $P'$ if

\begin{enumerate}[(a)]
\item either $v\in \aff F$;
\item or among the facets of $P$ containing $F$ there is at least one which is visible from $v$ and at least one which is nonvisible.
\end{enumerate}
\end{enumerate}
Moreover, each face of $P'$ is of exactly one of the above three types. 	
\end{theorem}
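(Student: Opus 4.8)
The plan is to classify the proper faces of $P'$ according to the supporting hyperplanes that expose them (recall that every proper face of a polytope is exposed by some supporting hyperplane), splitting these hyperplanes into those that avoid $v$ and those that pass through $v$. Since $P\subseteq P'$ and $v$ is the only vertex of $P'$ outside $P$, the first thing I would record is that for every hyperplane $H$ supporting $P'$ one has
\[
P'\cap H=\conv\bigl((P\cap H)\cup(\{v\}\cap H)\bigr),
\]
because a point $\lambda v+(1-\lambda)p$ of $P'$ (with $p\in P$) lies on the supporting hyperplane $H$ only if both $v$ and $p$ do. Consequently, if $v\notin H$ the exposed face is $P\cap H$, a face of $P$ not containing $v$; and if $v\in H$ it is $\conv((P\cap H)\cup\{v\})$, a face containing $v$ whose base $F:=P\cap H$ is a face of $P$. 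I would also note at the outset that, since $v\notin P$ and $P$ is bounded, at least one facet of $P$ is visible from $v$ and at least one is nonvisible: a ray from $v$ through $\inte P$ enters $P$ through a visible facet and leaves through a nonvisible one.

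The key reformulation is to translate visibility into the sign of a single linear functional over a normal cone. Fix a face $F$ of $P$ and a point $f\in F$, let $G$ range over the facets of $P$ containing $F$, and write the outer normal $n_G$ so that $\langle n_G,x\rangle\le b_G$ on $P$ with equality on $G$. The normal cone $N(F)$ is generated by the $n_G$, and its relative interior consists of exactly those outer normals whose supporting hyperplane exposes precisely $F$. Setting $\ell(n):=\langle n,v-f\rangle$, a direct computation gives $\ell(n_G)<0$ iff $G$ is nonvisible, $\ell(n_G)>0$ iff $G$ is visible, and $\ell(n_G)=0$ iff $v\in\aff G$. Since $\ell$ is linear and every $n\in\ri N(F)$ is a strictly positive combination $\sum_G\lambda_G n_G$, the behaviour of $\ell$ on $\ri N(F)$ is governed entirely by its signs on the generators.

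With this in hand I would prove the three assertions in turn. For (i): $F$ is a face of $P'$ not containing $v$ iff some hyperplane exposing exactly $F$ has $v$ strictly on the interior side, that is, iff there exists $n\in\ri N(F)$ with $\ell(n)<0$; if some facet containing $F$ is nonvisible then $\ell(n_{G_0})<0$, and loading a large weight onto $n_{G_0}$ (keeping all others small and positive) yields such an $n$, whereas if all facets containing $F$ are visible then $\ell>0$ throughout $\ri N(F)$, so no exposing hyperplane puts $v$ on the interior side and, since $v\notin P$, $F$ is not a face of $P'$ at all. For (ii): $F':=\conv(F\cup\{v\})$ is a face of $P'$ iff some $n\in\ri N(F)$ satisfies $\ell(n)=0$; in case (a) every such $n$ works, because $v\in\aff F\subseteq H_n$ forces $\ell(n)=0$, while in case (b) the presence of a visible and a nonvisible facet gives two generators on which $\ell$ has opposite signs, so the weights on these two can be tuned to cancel the small positive contribution of the remaining generators while keeping all $\lambda_G>0$, placing $n$ genuinely in $\ri N(F)$. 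Finally the ``moreover'' clause follows by running the dichotomy $v\in H$ versus $v\notin H$ backwards: faces avoiding $v$ are exactly the type-(i) faces, faces containing $v$ are exactly the $\conv(F\cup\{v\})$, and such a base $F$ must satisfy (a) when $v\in\aff F$ and (b) when $v\notin\aff F$ (since $\ell(n)=0$ on $\ri N(F)$ excludes the all-visible and all-nonvisible cases); these three possibilities are mutually exclusive, as (i) excludes $v$ and (a), (b) are separated by whether $v\in\aff F$.

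I expect the main obstacle to be the normal-cone bookkeeping in case (b): one must ensure the cancellation producing $\ell(n)=0$ can be arranged with all coefficients $\lambda_G$ strictly positive, so that $n$ really lies in the relative interior of $N(F)$ and hence exposes $F$ exactly rather than some larger face. Establishing the opening identity $P'\cap H=\conv((P\cap H)\cup(\{v\}\cap H))$ and the existence of both a visible and a nonvisible facet is routine, but I would state these cleanly first, since the entire classification rests on them.
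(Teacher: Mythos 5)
The paper never proves this statement: it is imported verbatim from Gr\"unbaum (Theorem~5.2.1 there) and used as a black box in Section~2, so there is no in-paper argument to measure yours against. Judged on its own terms, your normal-cone proof is sound and is a legitimate, more dual-flavoured alternative to the classical direct manipulation of supporting hyperplanes. The opening identity $P'\cap H=\conv\bigl((P\cap H)\cup(\{v\}\cap H)\bigr)$ is correct, the identification of $\ri N(F)$ with the strictly positive combinations of the facet normals $n_G$ is the standard description of the normal fan, and the sign analysis of $\ell(n)=\langle n,v-f\rangle$ genuinely reduces (i) and (ii) to the elementary questions of when a linear functional can be made negative, respectively zero, on $\ri N(F)$; your weight-loading and cancellation arguments for those two questions are fine.

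Two places need tightening, both stemming from treating visible/nonvisible as a dichotomy when the paper's definitions make it a trichotomy ($v$ may lie on $\aff G$). In the converse direction of (i) you only treat the case ``all facets containing $F$ are visible''; the correct complement of ``some facet containing $F$ is nonvisible'' is ``every facet $G\supseteq F$ has $\ell(n_G)\ge 0$'', which still gives $\ell\ge 0$ on all of $N(F)$ and hence no exposing hyperplane with $v$ strictly on the interior side, so the conclusion survives, but the case split should be stated that way. Similarly, in the ``moreover'' clause, showing that a face $\conv(F\cup\{v\})$ with $v\notin\aff F$ satisfies (b) requires slightly more than ``excluding the all-visible and all-nonvisible cases'': from $\sum_G\lambda_G\,\ell(n_G)=0$ with all $\lambda_G>0$, first use $\aff F=\bigcap_{G\supseteq F}\aff G$ to see that not all $\ell(n_G)$ vanish, and then the positivity of the weights forces both a strictly positive and a strictly negative term, i.e.\ a visible and a nonvisible facet. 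Finally, the vertex $\{v\}$ of $P'$ arises from the base $F=\emptyset$, which your classification should either exclude by restricting to nonempty faces or dispose of by convention. None of these is a structural flaw; each is repaired by the same linear-functional computation you already set up.
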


\begin{proposition}\label{prop:2-skel-Const}
For every dimension $d\ge4$ there is a pair of $d$-polytopes $Q_d^1$ and $Q_d^2$ with $2d$ vertices, nonisomorphic face lattices and isomorphic $(d-3)$-skeleta, such that each has exactly $d-1$ nonsimple vertices. In particular, $\beta_{d-3,d}\leq d-2$.
\end{proposition}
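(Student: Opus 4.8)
The plan is to realise, in every dimension $d\ge 4$, a single local \emph{gadget} inside an otherwise rigid polytope and to exhibit it in two combinatorial guises. The gadget is a $(d-1)$-dimensional bipyramid $\bipyr(T)$ over a $(d-2)$-simplex $T=\conv\{w_1,\dots,w_{d-1}\}$ with apexes $a,b$, appearing as a facet of a $d$-polytope; recall that its equator $T$ is \emph{not} a face of $\bipyr(T)$, whereas every proper face of $T$ is. The two polytopes $Q_d^1$ and $Q_d^2$ will share the same $2d$ vertices and agree on every face outside this gadget; they differ only in that $Q_d^1$ carries the \emph{bent} version (the two $(d-1)$-simplices $\conv(T\cup\{a\})$ and $\conv(T\cup\{b\})$ meeting along a genuine ridge $T$), while $Q_d^2$ carries the \emph{flat} version (the single bipyramid facet $\bipyr(T)$, in which $T$ is only a diagonal). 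I would produce both by induction on $d$, starting from the $8$-vertex pair of \cref{fig:nonrecontructible-3nonsimple} for $d=4$; each inductive step enlarges the equator $T$ by one vertex (which becomes nonsimple) and adds one simple vertex to the ``cap'' that closes the polytope, the facet structure being tracked throughout by \cref{thm:beneath-beyond}.

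The key point, to be verified at the level of the gadget, is that passing between the flat and bent versions changes the face poset only in dimensions $d-2$ and $d-1$. Indeed, the edges among $w_1,\dots,w_{d-1},a,b$ are the same in both (in particular $a$ and $b$ are non-adjacent in each, and each apex is joined to every $w_i$), and more generally every face of dimension at most $d-3$ contained in the gadget is either a proper face of $T$ or a pyramid with apex $a$ or $b$ over such a face --- a face present in both guises. The first discrepancy is the equator $T$ itself, a $(d-2)$-face of $Q_d^1$ but not of $Q_d^2$; correspondingly the one bipyramid facet of $Q_d^2$ is split into the two simplex facets of $Q_d^1$, the remaining (cap) facets being untouched. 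Hence the $(d-3)$-skeleta are isomorphic while the face lattices are not: $Q_d^1$ has exactly one more facet and one more ridge than $Q_d^2$, so their $f$-vectors differ in the top two coordinates.

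Next I would check the vertex bookkeeping. Both polytopes have the $d+1$ gadget vertices together with $d-1$ cap vertices, giving $2d$ in all. Each equator vertex $w_i$ is joined inside the gadget to the other $d-2$ vertices of $T$ and to both apexes, already $d$ edges, plus at least one edge into the cap; thus $\deg w_i>d$, and the $w_i$ are the nonsimple vertices, exactly $d-1$ of them. The cap is arranged so that each apex meets exactly one cap vertex (making $\deg a=\deg b=d$) and each cap vertex has degree $d$, so that no further vertex is nonsimple. Since the flat/bent modification leaves all of these edges untouched, the count of exactly $d-1$ nonsimple vertices holds for both $Q_d^1$ and $Q_d^2$.

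Finally, the two $d$-polytopes so obtained have isomorphic $(d-3)$-skeleta but non-isomorphic face lattices, each with exactly $d-1$ nonsimple vertices; thus the face lattice is not determined by the $(d-3)$-skeleton once $d-1$ nonsimple vertices are present, giving $\beta_{d-3,d}\le d-2$. The substantive difficulty is not any single verification above but their simultaneous control across the induction: one must exhibit, in each dimension, a cap making the gadget a genuine facet with the stated degrees, and must guarantee that flattening the crease at $T$ is \emph{local} --- that moving the relevant apex into the hyperplane of the opposite simplex merges exactly those two facets and perturbs no other face, in particular creating no edge $ab$ and no spurious facet through both $a$ and $b$. Keeping this locality while raising the dimension, so that the discrepancy stays pinned at dimension $d-2$ and never descends into the $(d-3)$-skeleton, is the heart of the argument; \cref{thm:beneath-beyond} is the tool that certifies, at each step, which faces survive and which are created.
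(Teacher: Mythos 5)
Your plan is, in outline, exactly the paper's construction: the gadget is the bipyramid facet over a $(d-2)$-simplex spanned by the $d-1$ nonsimple vertices (the paper's set $X$, with apexes labelled $0$ and $2d-1$), $Q_d^1$ carries it ``bent'' as two simplex facets meeting in the ridge $X$ while $Q_d^2$ carries it ``flat'' as a single bipyramid facet, the pair is built by induction on $d$ with \cref{thm:beneath-beyond} tracking faces, and your analysis of why the discrepancy is confined to dimensions $d-2$ and $d-1$, together with the vertex and degree bookkeeping, is correct and agrees with the paper.

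The gap is that the proposal never actually produces the polytopes, and you say so yourself: exhibiting a cap that makes the gadget a genuine facet with the stated degrees, and guaranteeing that flattening the crease at $T$ is local, is ``the heart of the argument.'' That heart is the entire content of the paper's proof and cannot be deferred: the statement is an existence statement, and what you have established is only that \emph{if} a $d$-polytope with this gadget-plus-cap structure exists in every dimension, and \emph{if} the crease can be flattened with no side effects, then the pair witnesses $\beta_{d-3,d}\le d-2$. Concretely, you do not specify the cap facets; you do not say how the two new vertices are positioned in the inductive step (the paper takes a pyramid over $Q_d^1$, whose apex joins the equator, and then the convex hull with a point placed on the affine hull of a specific triangle subject to explicit visibility conditions --- and note that there the \emph{new} vertex becomes the new apex of the gadget while the \emph{old} apex $2d-1$ is demoted to the cap, which is not quite the static-apex picture your sketch suggests); and you do not verify that flattening merges exactly the two simplex facets and nothing else (the paper does this by sliding one apex along the ray from $2d-3$ until it first meets the supporting hyperplane of the opposite simplex facet, then checks via \cref{thm:beneath-beyond} that exactly one ridge, namely the one with vertex set $X$, is destroyed). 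Until these constructions and verifications are supplied --- for instance by the paper's explicit labelling and facet list of Types A--E --- the proposal is a correct and well-aimed plan rather than a proof.
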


The proof of the proposition follows from the following two claims.

{\bf Claim 1.} For every $d\ge 3$ there is a $d$-polytope $Q^1_d$ with $2d$ vertices labelled \(0,\ldots,2d-1\) in such a way that (1) the $d-1$ nonsimple vertices of $Q_d^1$  have positive even labels, and that  (2)  its $2d$ facets are as follows. Let \(X\) denote the set of even-labelled vertices except \(0\).
\begin{description}
    \item[Type A] A simplex: \(\{0\} \cup X\);
    \item[Type B] $d-1$ facets of the form: \(\{0\} \cup \{2i+1: i=0,\ldots,k-1\} \cup X\setminus \{2k\}\) for \(k=1,\ldots,d-1\);

    \item[Type C] $d-2$ facets of the form:  \(\{2d-1\} \cup \{2i+1: i=k-1,\ldots,d-2\} \cup X\setminus \{2k\}\) for \(k=1,\ldots,d-2\);
        \item[Type D] A simplex: $\{2d-3,2d-1\}\cup X\setminus \{2(d-1)\}$; and
        \item[Type E] A simplex: \(\{2d-1\} \cup X\).

\end{description}

\begin{proof} The construction of the polytope $Q^1_d$ is by induction, with the base case $d=3$ depicted in \cref{fig:Qd} ~(a). We now construct $Q^1_{d+1}$ from $Q^1_d$.

\begin{figure}
\begin{center}
\includegraphics[scale=1.1]{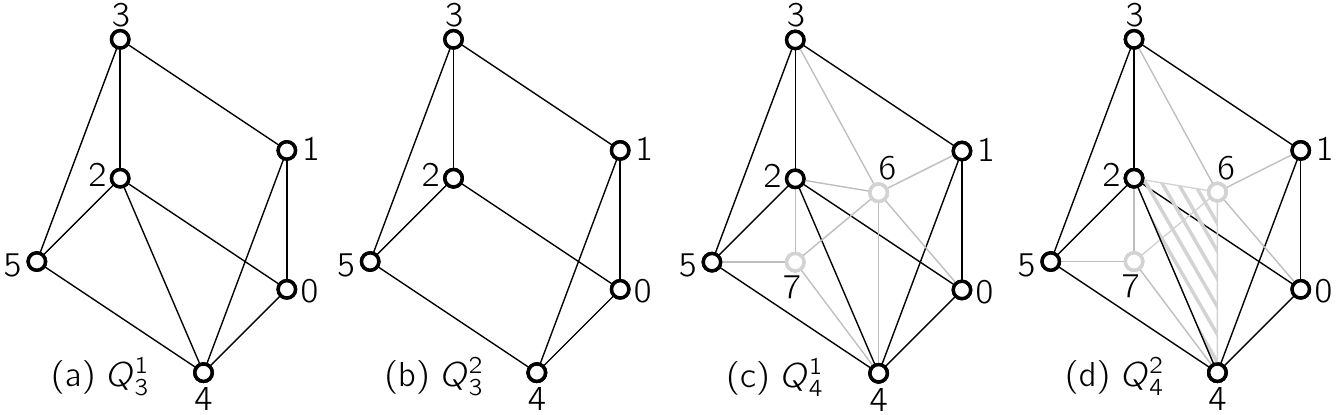}
\end{center}
\caption{The pair of $3$-polytopes $Q^1_3$ and $Q^2_3$ in $(a)-(b)$, and Schlegel diagrams of the pair of $4$-polytopes $Q^1_4$ and $Q^2_4$, projected on the fact isomorphic to $Q^1_3$ in $(c)-(d)$. The missing 2-face of the bipyramid face $02467$ in $Q^2_4$ is highlighted.}
\label{fig:Qd}
\end{figure}

\begin{enumerate}
\item Construct a pyramid over \(Q_d^1\) and label the apex of the pyramid by \(2d\), and let \(X': = X\cup \{2d\}\).
    \item Take the convex hull with a new vertex $v$ labelled \(2d+1\) positioned on the affine hull of the triangle \(\{2d-3,2d-1,2d\}\) in such a way that every facet not containing the edge \([2d-1,2d]\) is nonvisible from the point $v=2d+1$, and {\it the} facet containing the edge  but not the triangle is visible from the point $v$. Note that this triangle is a proper face of the pyramid because of the facet of Type D of \(Q_d^1\).
\end{enumerate}

The facets of the polytope $Q^1_{d+1}$ are as follows.

From \cref{thm:beneath-beyond}(i) it follows that any facet of the pyramid not containing the edge \([2d-1,2d]\) will remain a facet of the polytope. These are our facets of Types A-B.
	\begin{description}
	    \item[Type A] A simplex: \(\{0\} \cup X'\);
	    \item[Type B] $d+1-1$ facets of the form: \(\{0\} \cup \{2i+1: i=0,\ldots,k-1\} \cup X'\setminus \{2k\}\) for \(k=1,\ldots,d+1-1\);
	\end{description}
\cref{thm:beneath-beyond} (ii-a) gives that any facet $F$ of the pyramid containing the triangle \(\{2d-3,2d-1,2d\}\) is contained in the corresponding facet $F'=\conv(F\cup\{2d+1\})$ in the new polytope. These new facets are of Type C.
	\begin{description}
	    \item[Type C] $d+1-2$ facets of the form:  \(\{2(d+1)-1\} \cup \{2i+1: i=k-1,\ldots,d+1-2\} \cup X'\setminus \{2k\}\) for \(k=1,\ldots,d+1-2\);
	\end{description}
Finally, \cref{thm:beneath-beyond} (ii-b) ensures that  the union of the vertex $v=2d+1$ with every $(d-2)$-face not containing the edge \([2d-1,2d]\) of the only remaining facet of the pyramid, which is the simplex \(\{2d\}\cup \{2d-1\} \cup X\), will form a facet. There are exactly two such facets; the Types D-E.
	\begin{description}
\item[Type D] A simplex: $\{2(d+1)-3, 2(d+1)-1\} \cup X'\setminus \{2d\}$; and
	    	    \item[Type E] A simplex: \(\{2(d+1)-1\} \cup X'\);
	\end{description}
It remains to show that the nonsimple vertices of $Q^1_{d+1}$ are exactly the elements of $X'$.
By induction, the nonsimple vertices of $Q^1_{d}$ form the set $X$, so in the pyramid over $Q^1_{d}$ the nonsimple vertices form the set $X'$. Taking the convex hull with $v=2d+1$, the edge $\{2d-1,2d\}$ disappears, and the edges containing $v$ are created, with one of them being $\{v,2d\}$. Thus, it remains to check that $v$ is simple; indeed, $v$ is adjacent  to exactly the vertices in $X'\cup\{2d-1\}$, as \cref{thm:beneath-beyond} (ii-b) shows.
This completes the proof of the claim. 	
\end{proof}	

{\bf Claim 2.} For every dimension $d\ge 4$ there is a polytope $Q^2_d$ with the same $(d-3)$-skeleton as $Q^1_d$, whose $2d$ vertices are labelled \(0,\ldots,2d-1\) such that (1) the $d-1$ nonsimple vertices of $Q_d^2$  have positive even labels, and (2)  the $2d-1$ facets are as follows.
\begin{description}
    \item[Type A'] A bipyramid over a simplex: \(\{0,2d-1\} \cup X\);
    \item[Type B'] $d-1$ facets of the form: \(\{0\} \cup \{2i+1: i=0,\ldots,k-1\} \cup X\setminus \{2k\}\) for \(k=1,\ldots,d-1\);
 
    \item[Type C'] $d-2$ facets of the form:  \(\{2d-1\} \cup \{2i+1: i=k-1,\ldots,d-2\} \cup X\setminus \{2k\}\) for \(k=1,\ldots,d-2\); and
    \item[Type D'] A simplex: $\{2d-3,2d-1\}\cup X\setminus \{2(d-1)\}$.
\end{description}
 
In short, the polytope $Q^2_d$ is created by gluing the simplex facets of Type A and Type E of $Q^1_d$ along the ridge with vertex set $X$ to create a bipyramid of $Q^2_d$, the facet of Type A'. The ridge with vertex set $X$ of $Q^1_d$ then becomes a missing ridge in $Q^2_d$.

\begin{proof}  We construct the $d$-polytope $Q^2_d$ by taking the convex hull of $Q_d^1$ and a new vertex $v^*$. 

First consider the edge $e=[2d-3,2d-1]$ of $Q_d^1$ and the unique facet $F$ containing the vertex $2d-1$ but not $2d-3$; note that the vertex $2d-1$ is simple. The facet $F$ is a simplex with vertices $\{2d-1\}\cup X$. Place the vertex $v^*$ beyond the facet $F$ along the the ray emanating from the point $2d-3$ and containing the edge $[2d-3,2d-1]$ so that $v^{*}$ lies on the first hyperplane $H$ encountered which supports some facet $\bar F$. This ensures that any facet of  $Q_d^1$ different from $F$, $\bar F$ or the $d-1$ facets $F^e_1,\ldots,F^e_{d-1}$ containing the edge $e$ is nonvisible from the vertex $v^*$.  To show that the hyperplane $H$ above exists, note that in the construction of $Q^1_d$ we can place the vertex $v$ labeled $2d-1$ arbitrarily close to the vertex $2d-2$, thereby ensuring that the ray emanating from $2d-3$ and containing the edge $e$ intersects a hyperplane which supports a facet containing the vertex $2d- 2$ but not the edge $e$.  Such a facet exists; for example, the facet with vertex set $\{0\}\cup X$.The polytope $Q_d^2$ is the convex hull of $Q_d^1$ and $v^*$; thus, the vertex set of  $Q^2_d$  is obtained from the vertex set of  $Q^1_d$ by deleting $v$ and adding $v^{*}$, which we also label as $2d-1$.

 Since $v^*\in H$, there is at least one ridge $R$ of $\bar F$ which is visible from $v^*$ with respect to $\bar F$ in $H$. This implies that the other facet  containing $R$ is visible from $v^{*}$ with respect to $Q^1_d$ in $\mathbb{R}^{d}$. Since there is exactly one facet of $Q^1_d$ visible from $v^*$ (with respect to $Q^1_d$), namely, $F$,   we must have $R=F\cap \bar F $ in $Q_d^1$ and $R$ is unique.

In addition to $\bar F$ not containing the edge $[2d-3,2d-1]$, it does not contain the vertex $2d-1$ either. This implies that the set of vertices of $R$ is $X$. In particular, the facet $\bar F$ must be the facet with vertex set $\{0\}\cup X$ of $Q_d^1$.

From \cref{thm:beneath-beyond} (ii-a) it follows that the facet $\bar F$ of $Q^1_d$ is replaced by  the facet with vertex set $\{0,v^*\}\cup X$, which is a bipyramid over the simplex $R$ with vertex set $X$. Combinatorially, with the exception of the ridge $R$, this bipyramid has the same face lattice as the boundaries of the union of the two simplex facets $F$ and $\bar F$ (Types A and E) of $Q_d^1$.

Now consider any face $J$ of $Q^1_d$ not contained in $\bar F$. We have three possibilities: (1) the face is contained in a facet nonvisible from the vertex $v^*$, (2) the face contains the edge $e=[2d-3,2d-1]$,  and (3) the face does not contain the edge and it is not contained in a facet nonvisible from the vertex $v^*$. In the first case, by \cref{thm:beneath-beyond} (i), this face is also a face of $Q^2_d$. In the second case, the vertex $v^*$ is in the affine hull of the face and by \cref{thm:beneath-beyond} (ii-a), the corresponding face $J'$ in $Q^2_d$ has the same dimension as $J$ and the form $\conv (J\cup \{v^*\})$. In the third case, the face $J$ must be contained in intersections involving the facet $F$ and some facets in $\{F_1^e,\ldots,F_{d-1}^e\}$.  In this case  \cref{thm:beneath-beyond} (i) assures us that $J$ is not a face of $Q^2_d$. In summary,  the two simplex facets $F$ and $\bar F$  in  $Q^1_d$ are replaced by a bipyramid  in $Q^2_d$ with the same $(d-3)$-skeleton as $F\cup \bar F$, and every other facet of $Q^1_d$ falls into the first or second cases:  a facet of $Q^1_d$ falling in the first case remains a facet of $Q^2_d$ and  a facet $J$ of $Q^1_d$ falling into the second case is replaced by the facet $J'=J\cup \{v^*\}\setminus \{v\}$ in $Q^2_d$.

 Note that $X$ remains the set of nonsimple vertices in $Q^2_d$ as well.
This completes the proof of the claim.\end{proof}

Refer to \cref{fig:Qd} ~(c)-(d) for Schlegel diagrams of the polytopes $Q^1_4$ and $Q^2_4$, where the projection facet is isomorphic to $Q_3^1$.

\texttt{polymake} script \cite{GawJos00} implementing the ideas presented in \cref{prop:2-skel-Const} is available online at \cite{PinProgram}. Refer to the \texttt{polymake} script for information on how to run the program. Note that, in addition to the polytopes $Q^{1}_{d}$ and $Q^{2}_{d}$, this program constructs other pairs of polytopes with $d-1$ nonsimple vertices and the same $k$-skeleton for a given $k\ge1$ based on the ideas put forward in \cref{prop:2-skel-Const}.

\section{Reconstruction from 2-skeletons}
\label{sec:2-skel}

\begin{theorem}\label{thm:2-skel}
 For any fixed $d\geq 3$,
 let $P$ be a $d$-polytope with at most $\nu$ nonsimple vertices in each facet.
If $\nu\le d-2$ then $P$ is reconstructible from its 2-skeleton.
    Furthermore,  all the facets can be found in linear time in the number of vertices of the graph.
\end{theorem}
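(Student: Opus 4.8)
The plan is to reconstruct the vertex sets of all facets of $P$ from the $2$-skeleton; once these are known the whole face lattice follows, since every face is the intersection of the facets containing it, and correspondingly its vertex set is the intersection of their vertex sets. Throughout I use that a vertex is simple in $P$ exactly when it has degree $d$ in the graph, so the simple vertices are read off immediately. The central observation is that at a simple vertex $v$ the vertex figure is a $(d-1)$-simplex, so every pair of edges at $v$ spans a unique $2$-face, the $d$ facets through $v$ are in bijection with the $d$ edges at $v$ (the facet omitting a given edge), and a facet $F$ through $v$ contains precisely the $d-1$ edges at $v$ other than its omitted one. Thus knowing which edge $F$ omits at $v$ is equivalent to knowing the complete local structure of $F$ at $v$, and this information is visible in the $2$-skeleton.

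First I would establish the propagation step: if $v$ is simple, $w$ is a simple neighbour of $v$, and we know the edge that $F$ omits at $v$ (hence that $e=[v,w]$ lies in $F$), then we can determine the edge that $F$ omits at $w$. Indeed, among the $\binom{d-1}{2}$ two-faces of $F$ at $v$ exactly $d-2$ contain $e$, namely those spanned by $e$ together with each of the other $d-2$ edges of $F$ at $v$; each such $2$-face is a concrete polygon in the $2$-skeleton, and reading off its other edge at $w$ produces an edge of $F$ at $w$. Since distinct $2$-faces through $e$ determine distinct edges at the simple vertex $w$ (a $2$-face at a simple vertex is determined by its two edges there), this yields $d-2$ edges of $F$ at $w$ other than $e$; together with $e$ these are $d-1$ edges of $F$ at the degree-$d$ vertex $w$, so $F$ omits exactly the remaining edge at $w$. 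This local rule transports the knowledge of $F$ from one simple vertex to any simple neighbour.

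Next I would show the propagation reaches all of $F$. By Balinski's theorem the graph of the $(d-1)$-polytope $F$ is $(d-1)$-connected; since $F$ contains at most $\nu\le d-2$ vertices that are nonsimple in $P$, deleting them leaves a connected graph, so the vertices of $F$ that are simple in $P$ induce a connected subgraph of the graph of $F$. Hence, starting from any one simple vertex of $F$ with its omitted edge known and applying the propagation step along edges between simple vertices, we determine $F$ locally at every simple vertex of $F$. It remains to recover the nonsimple vertices: a nonsimple vertex $z\in F$ has at least $d-1$ neighbours in $F$, of which at most $d-3$ can be nonsimple (as $F$ has at most $d-2$ nonsimple vertices, including $z$), so $z$ has at least two simple neighbours in $F$ and is therefore discovered while processing them. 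Thus the full vertex set of $F$ is obtained. To seed the procedure, note that every facet contains at least $d-(d-2)=2$ simple vertices; so for every simple vertex $v$ and every edge $e_i$ at $v$ we grow the facet through $v$ omitting $e_i$, and collect the distinct results.

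The remaining point is the complexity, for which I would organise the computation so that each facet is produced once. Create, for each simple vertex $v$, the $d$ local patches recording its $d$ facets (one per omitted edge); the propagation rule specifies, for an edge of a facet $F$ joining two simple vertices, how the patch at one end is glued to the patch at the other. Taking connected components of the resulting patch-adjacency structure by union--find groups together exactly the simple patches of each facet, and the nonsimple vertices are attached as recorded neighbours. For fixed $d$ there are $O(n)$ patches and $O(n)$ adjacencies, the number of facets is $O(n)$, and the total size of all facets is $O(n)$, so all facets are found in time linear in the number $n$ of vertices. I expect the main obstacle to be the propagation lemma and, with it, the verification that the process stays within a single facet and never leaks into an adjacent one; this is exactly where the bound $\nu\le d-2$ is used twice, to keep the simple vertices of each facet connected and to guarantee that every nonsimple vertex of a facet has a simple neighbour there.
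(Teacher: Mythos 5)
Your proof is correct and follows essentially the same route as the paper: propagate the identification of a facet from one simple vertex to its simple neighbours using $2$-faces, and use Balinski's theorem together with the bound $\nu\le d-2$ both to keep the vertices of each facet that are simple in $P$ connected and to guarantee that every nonsimple vertex of a facet has a simple neighbour in it. The only cosmetic difference is the propagation step: you identify the $d-1$ edges of $F$ at $w$ via the $d-2$ two-faces of $F$ through the edge $vw$, whereas the paper (via Kaibel's observation, \cref{prop:Kaibel}) identifies the single omitted edge at $w$ via the one $2$-face through $vw$ and the omitted edge at $v$ --- complementary formulations of the same local rule.
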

This result is best possible as the pair $Q^1_d$ and $Q^2_d$ constructed in \cref{sec:construction} is an example of $d$-polytopes with $\nu=d-1$ and with isomorphic $(d-3)$-skeleton, but with a different number of facets.

For the proof we need the notion of \emph{frames}, and a useful observation of Kaibel about them (cf.~\cite[Prop.~1]{Kai03}), to be spelt out in \cref{prop:Kaibel}.
Define a {\it $k$-frame} as a subgraph of $G(P)$ isomorphic to the star $K_{1,k}$, where the vertex of degree $k$ is called the {\it root} or {\it centre}  of the frame. If the root of a frame is a simple vertex of $P$, we say that the frame is {\it simple}. For a simple vertex $v$ in a $k$-face $F$ of a polytope $P$, we say that the $k$-frame $t_v$ \emph{defines} $F$ if $t_{v}$ is the unique $k$-frame  with root $v$ that is contained in $F$.

We next rephrase  \cite[Prop.~1]{Kai03} to suit our needs and provide a proof.
\begin{proposition} Let $uv$ be an edge of a $d$-polytope $P$ with $u$ and $v$ being simple vertices in $P$. Let $F$ be a facet of $P$ containing both $u$ and $v$, with $t_{u}$ being the frame  centred at $u$ which defines $F$. If $u'$ is the unique neighbour of $u$ not in $t_{u}$ and $v'$ is the neighbour of $v$, other than $u$, which is contained in the 2-face of $P$ defined by the 2-frame $(u,u',v)$ with root $u$, then $v'$ is not in $F$. 
\label{prop:Kaibel}
\end{proposition}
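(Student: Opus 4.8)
The plan is to realise the desired 2-face explicitly and then intersect it with the facet $F$. Since $u$ is simple, it has exactly $d$ incident edges, and the $(d-1)$-frame $t_u$ that defines $F$ consists of $u$ together with the $d-1$ neighbours of $u$ lying in $F$; the remaining neighbour $u'$ then sits on the unique edge at $u$ that is \emph{not} contained in $F$. A first observation I would record is that in fact $u'\notin F$: were $u'$ a vertex of $F$, then since $F=P\cap H$ for a supporting hyperplane $H$ and $F$ is convex, the whole segment $[u,u']$ --- which is an edge of $P$ --- would lie in $F$, contradicting that $uu'$ is the edge at $u$ missing from $F$. (Equivalently, the $d$ edge directions at $u$ cannot all lie in the hyperplane $H$.) Note also that $v$, being a neighbour of $u$ in $F$, is one of the leaves of $t_u$, so $uv$ is an edge of $F$.

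Next I would invoke the standard description of the faces through a simple vertex: they correspond bijectively to the subsets of the $d$ edges at $u$. Hence the 2-frame $(u,u',v)$ determines a unique 2-face $G$ of $P$, containing both edges $uu'$ and $uv$, and $G$ is a polygon in which $v'$ is by definition the neighbour of $v$ other than $u$. The core of the argument is then to compute $G\cap F$. This intersection is a face of $P$ that is a face of both $G$ and $F$; it contains $u$ and $v$, hence, by convexity, contains the edge $uv$, so $\dim(G\cap F)\ge 1$. On the other hand $u'\in G$ while $u'\notin F$, so $G\not\subseteq F$ and $G\cap F$ is a \emph{proper} face of the polygon $G$. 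The only proper faces of a polygon are its vertices and edges, and the only edge meeting both $u$ and $v$ is $uv$ itself; therefore $G\cap F=uv$.

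The conclusion is now immediate: $v'$ is a vertex of $G$ distinct from both $u$ and $v$, so if $v'$ were in $F$ it would lie in $G\cap F=uv$, whose only vertices are $u$ and $v$ --- a contradiction. Hence $v'\notin F$, as claimed. I expect the only genuinely delicate points to be the two places where convexity of faces is used: establishing $u'\notin F$ (rather than merely that $uu'$ is not an edge of $F$), and pinning down $G\cap F$ as \emph{exactly} the edge $uv$ rather than something larger. Both reduce to the elementary fact that a face of $P$ contained in another face $F$ is itself a face of $F$, together with the simplicity of $u$, which is what guarantees that the frames involved genuinely define the faces $F$ and $G$.
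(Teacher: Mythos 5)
Your proof is correct, and it takes a genuinely different route from the paper's. The paper argues by contradiction at the vertex $v$: assuming $v'\in F$, it notes that the $2$-face $W$ in question is also defined by the $2$-frame $(v,u,v')$ with root $v$, which would then be a subframe of the $(d-1)$-frame $t_v$ defining $F$, forcing $W\subseteq F$ --- contradicting $u'\in W\setminus F$. Your argument instead works entirely at $u$ and inside the polygon $G=W$: you compute the intersection $G\cap F$ directly, using that it is a face of $P$ containing the edge $uv$ but not all of $G$ (since $u'\notin F$), hence a proper face of a polygon containing a $1$-face, hence exactly the edge $uv$; then $v'\notin\{u,v\}$ gives $v'\notin F$. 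The paper's proof leans on the simple-vertex correspondence between faces through $v$ and subsets of the edges at $v$, whereas yours uses only general face-lattice facts (intersections of faces are faces, proper faces of a polygon are vertices and edges) plus the simplicity of $u$; in particular your argument never uses that $v$ is simple, so it is marginally more general. Both proofs rest on the same crucial observation that $u'\notin F$, which you justify carefully and the paper leaves implicit. The two delicate points you flag --- $u'\notin F$ and pinning down $G\cap F$ exactly --- are handled correctly.
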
 
\begin{proof} Suppose, by way of contradiction, that $v'$ is in $F$. Denote by $t_{v}$ the $(d-1)$-frame of $v$ defining $F$.  Let  $W$ be the 2-face of $P$ defined by the 2-frame $(u,u',v)$ with root $u$. That is, $v'$ is in $t_{v}$ and in $W$. Since $v$ is in $F$ and every vertex in $t_{v}$ is in $F$, the 2-face $W$, which is also defined by the 2-frame $(v,u,v')$ with root $v$, would be contained in $F$, a contradiction.
 \end{proof}

\begin{proof}[Proof of Theorem~\ref{thm:2-skel}]

We show that a  modification of Friedman's algorithm \cite[Sec.~7]{Fri09} gives a proof of the theorem. Assume that we are given the 2-skeleton of $P$. Repeat the following routine, until all simple $(d-1)$-frames in $G(P)$ are visited.
\begin{enumerate}
\item Pick a simple vertex $u$ (it exists) and select any simple $(d-1)$-frame $t_u$ centred at $u$. Let $u'$ be the unique vertex adjacent to $u$ which is not in that frame. The frame $t_u$ is contained in a unique facet in $P$, denote it by $F_u$.

    \item Consider any other simple vertex $u''$ in the frame $t_u$ with $u''\ne u$ (it exists). Then there exists another simple $(d-1)$-frame $t_{u''}$ centered at $u''$ in the facet $F_u$.

\item Consider the neighbour $\hat u$ of $u''$, different from $u$, which is present in the 2-face $W$ that contains the 2-frame $(u,u',u'')$ with root $u$. We know all the  vertices of $W$. Then, applying \cref{prop:Kaibel} to the edge $uu''$ gives that the frame $t_{u''}$ is formed by all the vertices adjacent to $u''$ other than $\hat u$. 
\item Continue this process, always moving along edges formed by simple vertices, and stop when no new simple $(d-1)$-frame can be visited.
\end{enumerate} 
We show that when (4) stops the obtained graph spans the facet $F_u$. The conditions of the theorem guarantee that, in any facet $F_u$ of $P$, we can go from any simple vertex to any other simple vertex through a path only formed of simple vertices, since $G(F_u)$ is $(d-1)$-connected by Balinski's Theorem \cite[Sec.~3.5]{Zie95}. Thus, after Step (4) finishes, the vertex set of the graph obtained is the vertex set of the facet $F_u$.

Repeating this process for all simple $(d-1)$-frames will then reveal all the facets of $P$.

Finally, we show that this process, with a little preprocessing, runs in linear time in the number of vertices.
The number of simple $(d-1)$-frames is $d$ times the number of simple vertices of $P$, thus linear, and each such frame is visited once. It remains to check that the move from one simple $(d-1)$-frame, $t_u$, to the next, $t_{u''}$, can be done in constant time (depending on $d$): checking if the degree of a vertex (neighbour of $u$) is $d$ takes only constant time, and in case it is $d$ we need to find $\hat{u}$ in step (3) in constant time. For this, preprocess the data of the $2$-skeleton, to construct the graph $G_T$ (following Friedman~\cite{Fri09}) whose vertices are the $2$-frames, and two of them are connected by an edge iff the root of one is a vertex of the other and both belong to the same $2$-face. Move along edges of $G_T$, from the $2$-frame $(u,u',u'')$ with root $u$ to $(u'',u,\hat{u})$ with root $u''$, to find $\hat{u}$ in constant time.
As the number of faces in the $2$-skeleton is linear in the number of vertices (cf.~\cref{rem:linear-2-faces}), constructing $G_T$ takes linear time.
Thus, the complexity result follows.
\end{proof}
\begin{remark}\label{rem:linear-2-faces} For $k\in[1,d-1]$ the number of $k$-faces in a $d$-polytope with at most $f(d)$ nonsimple vertices is linear in the number $f_{0}$ of vertices of the polytope. Here $f$ is a function in $d$ independent of $f_{0}$. To see this, note that the number of $k$-faces involving a simple vertex is  at most $f_0 \binom{d}{k}$, and the number of $k$-faces all whose vertices are nonsimple is at most  $\binom{f(d)}{k+1}$; the assertion follows.
\end{remark}
	 
\begin{remark}
Let $P$ be a $d$-polytope with exactly $d-1$ nonsimple vertices, denote their set by $N$. Apply the above algorithm to obtain a collection of graphs $G_i$. The vertex sets $V(G_i)$ are exactly the vertex sets of the facets of $P$, providing reconstruction, unless the following happens: there is a single facet $F$ which contains $N$ and $N$ separates $G(F)$, in which case there are two subgraphs $G$ and $G'$ obtained by the algorithm such that $G\cup G'$ spans $F$ and $V(G\cap G')=N$. The only case where we cannot reconstruct is in case the induced graph $G(P)[N]$ on $N$ is complete, and there is ambiguity whether $P$ has two facets corresponding to $V(G)$ and $V(G')$ intersecting on a common ridge with vertex set $N$, or $P$ has a facet with vertex set $V(G\cup G')$ (all other facets are determined); this is demonstrated in the constructions of \cref{sec:construction}. Thus, if the \emph{parity} of the number of facets is also given, we can reconstruct.
\end{remark}

Combining Proposition~\ref{prop:2-skel-Const} and Theorem~\ref{thm:2-skel} gives
\begin{corollary}\label{cor:2-skel}
$\beta_{2,d}= \ldots =\beta_{d-3,d}= d-2$.
\end{corollary}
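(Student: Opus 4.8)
The plan is to squeeze the quantities $\beta_{2,d},\dots,\beta_{d-3,d}$ against the value $d-2$ from both sides, using \cref{thm:2-skel} for the lower bound, \cref{prop:2-skel-Const} for the upper bound, and the monotonicity chain
\[
\beta_{2,d}\le\beta_{3,d}\le\dots\le\beta_{d-3,d}
\]
already recorded in the introduction (any polytope reconstructible from its $k$-skeleton is \emph{a fortiori} reconstructible from its $(k+1)$-skeleton, since the $k$-skeleton is determined by the $(k+1)$-skeleton).

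For the lower bound $\beta_{2,d}\ge d-2$, I would take an arbitrary $d$-polytope $P$ with at most $d-2$ nonsimple vertices in total. The nonsimple vertices lying in any single facet form a subset of all the nonsimple vertices of $P$, so every facet of $P$ contains at most $d-2$ nonsimple vertices. Thus \cref{thm:2-skel} applies with $\nu=d-2$, and $P$ is reconstructible from its $2$-skeleton. As $P$ was arbitrary, $\beta_{2,d}\ge d-2$.

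For the upper bound $\beta_{d-3,d}\le d-2$, I would invoke \cref{prop:2-skel-Const}, which exhibits a pair of $d$-polytopes $Q^1_d$ and $Q^2_d$ with nonisomorphic face lattices, isomorphic $(d-3)$-skeleta, and exactly $d-1$ nonsimple vertices each. Their existence shows that not every $d$-polytope with at most $d-1$ nonsimple vertices is reconstructible from its $(d-3)$-skeleton, whence $\beta_{d-3,d}<d-1$, i.e.\ $\beta_{d-3,d}\le d-2$. Chaining these bounds through the monotonicity inequality gives
\[
d-2\le\beta_{2,d}\le\beta_{3,d}\le\dots\le\beta_{d-3,d}\le d-2,
\]
forcing equality throughout and yielding $\beta_{2,d}=\dots=\beta_{d-3,d}=d-2$. (For $d=4$ the index range $2\le k\le d-3$ is empty and the statement is vacuous; the genuine content for $d=4$ is the separate assertion $\beta_{1,4}=2$.)

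There is no substantive obstacle here, as the corollary is purely an assembly of the two main results; the only point requiring any care is the passage from the \emph{global} hypothesis implicit in the definition of $\beta_{2,d}$ (at most $d-2$ nonsimple vertices overall) to the \emph{per-facet} hypothesis of \cref{thm:2-skel} (at most $\nu=d-2$ nonsimple vertices in each facet). Since a global bound on the nonsimple vertices immediately bounds their number in each facet, this translation is trivial, and the conclusion follows.
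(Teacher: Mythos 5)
Your proof is correct and takes essentially the same route as the paper, which simply combines \cref{prop:2-skel-Const} and \cref{thm:2-skel} with the monotonicity chain from the introduction; your explicit note that the global bound on nonsimple vertices implies the per-facet hypothesis $\nu\le d-2$ of \cref{thm:2-skel} is the only detail the paper leaves implicit, and you handle it correctly.
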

	
\section{Reconstruction from graphs}
\label{sec:1-skel}
 In this section we prove that, like simple polytopes, $d$-polytopes with at most two nonsimple vertices are reconstructible from their graphs; see \cref{thm:expRecTwo}. This is best possible for $d=4$, as the 4-polytopes $Q^1_4$ and $Q^2_4$ in \cref{fig:nonrecontructible-3nonsimple} have three nonsimple vertices and the same graph. In case of one nonsimple vertex the reconstruction can be done in polynomial time, following Friedman~\cite{Fri09}. 
 
We start with some preparations.

\subsection{Special good orientations}
 
Following Kalai \cite{Kal88}, call an acyclic orientation  of $G(P)$ {\it good } if for every nonempty face $F$ of $P$ the graph $G(F)$ of $F$ has a unique sink\footnote{ A {\it sink} is a vertex with no directed edges going out.}. Actually, we only need that the acyclic orientation has a unique sink in every facet, so for us this possibly larger set represents the good orientations. 
The following remark is simple but important.

\begin{remark}\label{rmk:acycOrient} Let $P$ be a polytope, $O$ an acyclic orientation of $G(P)$, $F$ a $k$-face of $P$ with $k\ge 2$, and let $w$ be any vertex in $G(F)$. Then there is a directed path in $G(F)$ from $w$ to some sink in $F$ and a directed path in $G(F)$ from some source\footnote{ A {\it source} is a vertex with no directed edges coming in.} in $F$ to $w$.
\end{remark}

Define an {\it initial} set with respect to some orientation as a set such that no edge is directed from a vertex not in the set to a vertex in the set. Similarly, a {\it final} set with respect to some orientation is a set such that no edge is directed from a vertex in the set to a vertex not in the set. 

We proceed with a remark where initial sets play an important role.

\begin{remark}
\label{rmk:orientation-composition} Let $P$ be a $d$-polytope, let $F$ be a face of $P$ and let $O$ be a good orientation of $G(P)$ in which $V(F)$ is initial. Further, denote by $O|_F$ the good orientation of $G(F)$ induced by $O$.  If $O_{F}'$ is a good orientation of $G(F)$ other than $O|_F$, then the orientation $O'$ of $G(P)$ obtained from $O$ by directing  the edges of $G(F)$ according to $O_{F}'$ is a also good orientation.
\end{remark}

The next lemma establishes the existence of good orientations with some special properties.

\begin{lemma} \label{lem:Orientation-F-Initial} Let $P$ be a polytope. For every two disjoint faces $F_{i}$  and $F_{j} $ of $P$,  there is a good orientation of $G(P)$ such that (1) the vertices in $F_{i}$ are initial, (2) the vertices in $F_{j}$ are final, and (3) within the face $F_{i}$, any two vertices (if they exist) can be chosen to be the (local) sink and the (global) source.
\end{lemma}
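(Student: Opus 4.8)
The plan is to build the desired good orientation from a generic linear functional, using Kalai's correspondence between good orientations and orderings induced by linear functions in general position. Recall that if $\varphi$ is a linear functional that is injective on the vertex set of $P$, then orienting each edge of $G(P)$ toward its endpoint of larger $\varphi$-value gives a good orientation: on every face $F$ the restriction $\varphi|_F$ attains a unique maximum, which is the unique sink of $G(F)$. So the whole problem reduces to exhibiting a linear functional in general position on $V(P)$ whose values order the vertices so that all of $F_i$ comes first, all of $F_j$ comes last, and a prescribed vertex of $F_i$ is globally smallest while another prescribed vertex of $F_i$ is the largest vertex of $F_i$.

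The key geometric input is that $F_i$ and $F_j$ are disjoint faces of $P$, so they have disjoint affine hulls and, more importantly, can be strictly separated from the rest of the vertices by a linear functional. First I would pick a linear functional $\psi$ that is minimized exactly on $F_i$ and maximized exactly on $F_j$ over $P$; such a $\psi$ exists because two disjoint faces of a polytope lie on a common face $G$ that is itself a polytope in which $F_i,F_j$ are disjoint faces, and one can separate them by a supporting construction (equivalently, the normal cones of $F_i$ and $F_j$ meet the requirement). Concretely, take $\psi_i$ supporting $P$ at $F_i$ (so $\psi_i$ is constant on $F_i$ and strictly larger elsewhere) and $\psi_j$ supporting $P$ at $F_j$; then for a large weight $M$ the functional $\psi := \psi_i - \psi_j + M\psi_i$ — or more cleanly $\psi := -\psi_i' + \psi_j'$ chosen so that $F_i$ is a strict global min-face and $F_j$ a strict global max-face — separates $F_i$, the "middle" vertices, and $F_j$ into three value-bands with all of $V(F_i)$ below all middle vertices below all of $V(F_j)$. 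This guarantees conditions (1) and (2): $V(F_i)$ is initial and $V(F_j)$ is final in the resulting orientation.

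Next I would handle condition (3) together with the required general position. Within the band occupied by $V(F_i)$, I still have freedom to perturb $\psi$ by any functional supported on $\aff F_i$ without disturbing the separation of the three bands, as long as the perturbation is small. Using this freedom I choose an auxiliary functional on $\aff F_i$ that is minimized at the prescribed source vertex of $F_i$ and maximized at the prescribed sink vertex of $F_i$ (possible since these are two distinct vertices of the polytope $F_i$, hence separable by a linear functional on $\aff F_i$). Adding a sufficiently small multiple of this auxiliary functional makes the chosen source the global minimum and the chosen vertex the maximum over $F_i$, while keeping $V(F_i)$ initial, $V(F_j)$ final, and all band inequalities strict. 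Finally, a further generic infinitesimal perturbation puts $\psi$ in general position on all of $V(P)$ (so it is injective on vertices and gives a genuine good orientation) without changing any of the already-strict inequalities.

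The main obstacle, and the point needing the most care, is ensuring that the separating functional $\psi$ can simultaneously be minimized on all of $F_i$ and maximized on all of $F_j$ over the whole polytope $P$, rather than merely over some smaller face — in other words, that disjoint faces admit such a three-band separation globally. I would address this by invoking that $F_i$ and $F_j$, being disjoint faces, have disjoint relative interiors and can each be cut out by supporting hyperplanes; combining one supporting functional for each (with appropriate signs and a dominating weight) yields a functional whose min-face is $F_i$ and whose max-face is $F_j$. One must verify that no vertex outside $F_i \cup F_j$ accidentally lands inside the $F_i$-band or the $F_j$-band, which is where the weight $M$ and the strictness of the supporting inequalities at vertices not on $F_i$ (resp.\ $F_j$) are used. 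Once this separation is secured, conditions (1)--(3) and genericity all follow from the perturbation argument above, and \cref{rmk:acycOrient} and \cref{rmk:orientation-composition} are available to translate the statement into the language of sinks and sources if a cleaner combinatorial finish is preferred.
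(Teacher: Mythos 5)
Your overall strategy (orient edges by a linear functional that puts $V(F_i)$ in a bottom band and $V(F_j)$ in a top band, then perturb) is the right one, but the step you yourself flag as the main obstacle is a genuine gap, and the fix you propose does not work. It is \emph{not} true that for two disjoint faces of a polytope, in its given realization, there exists a linear functional minimized exactly on $F_i$ and maximized exactly on $F_j$ (nor even one achieving the three-band separation of the vertex set). Such a functional would have to be constant on both $\aff F_i$ and $\aff F_j$, which already fails for a combinatorial cube realized with non-parallel top and bottom facets; for a band-type counterexample, take a regular hexagon $v_1\ldots v_6$ with $F_i$ the edge $v_1v_2$ and $F_j$ the edge $v_3v_4$: the open arc of directions putting $\{v_1,v_2\}$ at the bottom and the open arc putting $\{v_3,v_4\}$ at the top are disjoint. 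Your proposed remedy --- combining a supporting functional $\psi_i$ for $F_i$ with one for $F_j$ using a dominating weight $M$ --- fails for exactly this reason: once $M\psi_j$ dominates, its variation across $V(F_i)$ swamps the gap provided by $\psi_i$, so $V(F_i)$ is no longer initial (and symmetrically if the weight is placed on $\psi_i$). In cone language, the two nonempty open convex cones of functionals achieving each band separately need not intersect, and convex combinations of points from each need not lie in either.

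The missing idea, which is the heart of the paper's proof, is to first replace $P$ by a \emph{projectively equivalent} polytope: embed $P$ in a hyperplane of $\mathbb{R}^{d+1}$, take supporting hyperplanes $K_i\supseteq F_i$ and $K_j\supseteq F_j$, and send the $(d-2)$-dimensional space $K_i\cap K_j$ (which is disjoint from $P$) to infinity. In the new realization the two supporting hyperplanes are parallel, so a linear functional vanishing on one and positive on the other is simultaneously minimized exactly on $F_i$ and maximized exactly on $F_j$; a generic perturbation then gives the good orientation. Since a good orientation is a combinatorial object, producing it on a combinatorially equivalent realization suffices. Note that your treatment of condition (3) has the same gap in miniature: two prescribed vertices $s,t$ of $F_i$ need not admit a linear functional on $\aff F_i$ with $t$ the unique minimum and $s$ the unique maximum over $V(F_i)$ (again the hexagon with adjacent vertices is a counterexample), so a small perturbation supported on $\aff F_i$ cannot in general arrange it. The paper instead applies the same projective transformation to $F_i$ inside $\aff F_i$ to get a good orientation of $G(F_i)$ with the prescribed source and sink, and then splices it into the global orientation using \cref{rmk:orientation-composition}, which is legitimate precisely because $V(F_i)$ is initial.
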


\begin{proof}  We first preprocess the given $d$-polytope $P$ to obtain a projectively equivalent polytope $P'$ in which the supporting hyperplanes of the faces $F_{i}$ and $F_{j}$ are parallel.  We provide the relevant transformation next.

Embed $P$ in a hyperplane $H_{\text{emb}}$ of $\mathbb{R}^{d+1}$ not passing through the origin, say $H_{\text{emb}}:=\{\vec x\in \mathbb{R}^{d+1}: \text{$x_{d+1}=1$}\}$. 
 Within $H_{\text{emb}}$ consider affine $(d-1)$-spaces $K_{i}$ and $K_{j}$ supporting  the faces $F_{i}$ and $F_{j}$ of $P$, respectively.   The intersection of $K_{i}$ and $K_{j}$ in $H_{\text{emb}}$ is an affine $(d-2)$-space which is disjoint from $P$. Consider a hyperplane $H_{\infty}$ in $\mathbb{R}^{d+1}$ through the origin whose intersection with $H_{\text{emb}}$ contains $K_{i}\cap K_{j}$ and whose positive half-space $H_{\infty}^{+}$ contains $P$ in its interior. Finally, let $H_{\text{proj}}$ be any hyperplane in $H^{+}_{\infty}$ parallel to $H_{\infty}$ and disjoint from $P$ but not coinciding with $H_{\infty}$. Following Ziegler \cite[Sec.~2.6]{Zie95}, the hyperplane $H_{\text{proj}}$ is called {\it admissible} for $P$. The projective $d$-space can be thought of as the union $H_{\text{proj}}\cup H_{\infty}$, where $H_{\infty}$ collects the points at $\infty$. We map $H_{\text{emb}}\setminus H_{\infty}$ onto $H_{\text{proj}}$ by  sending each point in $H_{\text{emb}}\setminus H_{\text{proj}}$ to the point in $H_{\text{proj}}$ lying on the same line through the origin, while the points in $H_{\text{emb}}\cap H_{\text{proj}}$ remain fixed. The image of $P$ under this map is a polytope $P'$  in $H_{\text{proj}}$ which is combinatorially equivalent to $P$. Since the intersection of the spaces $K_{i}$ and $K_{j}$ lies in $H_{\infty}$, their projections on $H_{\text{proj}}$ are parallel. This completes this transformation. We may therefore assume that $P$ has undergone the transformation we have just described.

Now back in $\mathbb{R}^{d}$ consider a hyperplane $K_{i}$ which supports the face $F_{i}$ of $P$ and is parallel to a hyperplane supporting $F_{j}$.  Let $g$ be a linear function which vanishes on $K_{i}$ and whose value on $K_{j}$ is positive. Perturb $g$ slightly so that the resulting linear function $f$ attains different values on the vertices of $P$. The function $f$ ensures the existence of a good orientation $O$ in which the vertices in $F_{i}$ are initial while the vertices in $F_{j}$ are final; this proves the conditions (1) and (2). 

To get the condition (3), consider the polytope $F_{i}$ in $\aff F_{i}$ (forgetting about $P$), and let $s$ and $t$ be two arbitrary vertices in $F_{i}$, if they exist. Performing the aforementioned projective transformation to $F_{i}$ in $\aff F_{i}$, we can assume that the vertices $s$ and $t$ admit parallel supporting hyperplanes in the space $\aff F_{i}$. Reasoning as before gives a good orientation $O_{i}$ of $F_{i}$ in which $s$ is a sink and $t$ is a source. If, in the orientation $O$, we reorient the edges of $F_{i}$ according to $O_{i}$, the resulting orientation $O'$ of $G(P)$ remains good since $F_{i}$ is initial (c.f.~\cref{rmk:orientation-composition}), thereby satisfying all the three conditions. This completes the proof of the lemma.        
 \end{proof}

Any induced $(d-1)$-connected subgraph of $G(P)$ where simple vertices in the polytope have each degree $d-1$ and nonsimple vertices have each degree $\ge d-1$ is called a {\it feasible}  subgraph. We say that a $k$-frame with root $x$ is {\it valid}  if there is a facet of $P$ containing $x$ and the edges of the frame and no other edge incident to $x$. Thus, if $x$ is a simple vertex then any of its $(d-1)$-frames is valid.

\begin{lemma}\label{lem:feasible-subgraphs}  Let $P$ be a $d$-polytope, and let $H$ be a feasible subgraph of $G(P)$ containing at most $d-2$ nonsimple vertices. If the graph $G(F)$ of some facet $F$ is contained in $H$, then $H=G(F)$.
\end{lemma}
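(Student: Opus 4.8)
The plan is to argue by contradiction, exploiting the fact that a simple vertex of $P$ lying in $F$ already uses up all of its allowed degree inside $G(F)$, so that the only vertices of $F$ through which $H$ can ``escape'' the facet are the (few) nonsimple ones.

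First I would record two elementary degree facts. If $v$ is a simple vertex of $P$ contained in the facet $F$, then exactly $d-1$ of its $d$ edges lie in $F$ (the vertex figure of $v$ is a simplex, and $F$ corresponds to one of its facets, which contains $d-1$ of the $d$ vertices of that simplex), so $\deg_{G(F)}(v)=d-1$. Since $G(F)\subseteq H$ and $H$ is feasible, $\deg_H(v)\ge d-1$, while feasibility forces $\deg_H(v)=d-1$ because $v$ is simple. Consequently every edge of $H$ incident to such a $v$ already lies in $G(F)$; in particular, \emph{no simple vertex of $F$ has a neighbour in $H$ outside $V(F)$}. This degree-saturation step is the crux of the whole argument.

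Next, observe that $G(F)$ is an induced subgraph of $G(P)$ (an edge of $P$ with both endpoints in the face $F$ is itself an edge of $F$), and $H$ is induced by hypothesis, so $G(F)\subseteq H$ is equivalent to $V(F)\subseteq V(H)$, and proving $H=G(F)$ amounts to proving $V(H)=V(F)$. Suppose then, for contradiction, that there is a vertex $w\in V(H)\setminus V(F)$. Let $S$ be the set of vertices of $F$ having a neighbour in $H$ outside $V(F)$. By the previous paragraph $S$ contains only nonsimple vertices of $P$, so $|S|\le d-2$ by the hypothesis that $H$ has at most $d-2$ nonsimple vertices. By the definition of $S$, every edge of $H$ joining $V(F)$ to $V(H)\setminus V(F)$ has its $F$-endpoint in $S$; hence after deleting $S$ there is no edge between $V(F)\setminus S$ and $V(H)\setminus V(F)$, so these two sets lie in different components of $H-S$.

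It remains to check that this deletion genuinely disconnects $H$, which is where the dimension bound enters: $F$ is a $(d-1)$-polytope and therefore has at least $d$ vertices, so $|V(F)\setminus S|\ge d-(d-2)=2>0$; together with $w\in V(H)\setminus V(F)$ this exhibits $S$ as a separating set of size at most $d-2$ in $H$, contradicting the $(d-1)$-connectivity required of a feasible subgraph. Hence $V(H)\setminus V(F)=\emptyset$, so $V(H)=V(F)$ and $H=G(F)$. I expect no real difficulty beyond the degree-saturation observation confining $S$ to nonsimple vertices; the remainder is a short vertex-cut count against the connectivity hypothesis.
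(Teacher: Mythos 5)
Your proof is correct and takes essentially the same route as the paper's: simple vertices of $F$ are degree-saturated inside $G(F)$, so any escape from $F$ into $V(H)\setminus V(F)$ must pass through the at most $d-2$ nonsimple vertices, which would then separate $H$ and contradict its $(d-1)$-connectivity. Your write-up merely makes explicit the separator $S$ and the nonemptiness of both sides, which the paper leaves implicit.
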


\begin{proof}
If $\ver F = V(H)$, then $G(F)= H$, as $G(F)$ is an induced subgraph of $G(P)$. Otherwise, $\ver F \subsetneq V(H)$, in which case any path from a vertex in $H\setminus G(F)$ to a vertex in $G(F)$ must pass through a nonsimple vertex, since simple vertices of the polytope have the same degree in both $H$ and $G(F)$. Consequently, the nonsimple vertices would disconnect $H$, contradicting its $(d-1)$-connectivity.

\end{proof}	

\subsection{Polytopes with one nonsimple vertex}\label{subSec:one-two-vertices}
\begin{theorem}\label{thm:1vertex}
 Let $d\ge 3$. Every $d$-polytope with at most one nonsimple vertex can be reconstructed from its graph, in polynomial time in the number of vertices.
\end{theorem}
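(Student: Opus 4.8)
The plan is to recover the entire face lattice from a single good acyclic orientation of $G(P)$ that the graph alone can single out, with the whole difficulty concentrated at the unique nonsimple vertex, which I will push to a source. If $P$ has no nonsimple vertex it is simple and reconstructible by Blind--Mani--Kalai \cite{BliMan87,Kal88} (in polynomial time by Friedman \cite{Fri09}), so assume $P$ has exactly one nonsimple vertex $z$; for $d=3$ the statement is classical (Steinitz/Whitney), so the interesting range is $d\ge4$. By \cref{lem:Orientation-F-Initial} applied to $\{z\}$ and any disjoint vertex there is a good orientation making $z$ a source, and I work throughout inside the family $\mathcal A_z$ of acyclic orientations of $G(P)$ in which $z$ is a source.

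First I would prove that the good orientations in $\mathcal A_z$ are recognisable directly from the graph, by adapting Kalai's counting to tolerate one nonsimple vertex. Since every vertex $v\neq z$ is simple, its vertex figure $P/v$ is a $(d-1)$-simplex, so the faces of $P$ through $v$ are in bijection with the subsets of the $d$ edges at $v$; hence in \emph{any} acyclic orientation the number of faces in which $v$ is a sink is exactly $2^{\operatorname{indeg}(v)}$. For $O\in\mathcal A_z$ the vertex $z$ is a sink of the single face $\{z\}$ only, so double counting the pairs (face, sink of that face) yields $f^{O}:=1+\sum_{v\neq z}2^{\operatorname{indeg}(v)}=\sum_{F}\#\{\text{sinks of }F\}\ge\#\{\text{faces of }P\}$, with equality exactly when every face has a unique sink, that is, when $O$ is good. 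As $f^{O}$ is computable from the graph and the orientation alone, the good orientations in $\mathcal A_z$ are precisely the minimisers of $f^{O}$ over $\mathcal A_z$; the graph therefore determines one, and, following Friedman \cite{Fri09}, one can be produced in polynomial time.

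Fixing a good $O\in\mathcal A_z$, I would then read off the faces. Because $z$ is a source, it is the sink of no positive-dimensional face, so every sink ever processed is simple and carries the simplex vertex figure on which the Blind--Mani--Kalai sink-based recovery relies; that recovery therefore applies verbatim, with $z$ entering only as the source at which the boundary paths of its faces meet (\cref{rmk:acycOrient}). The facets can be pinned down explicitly through the two preparatory lemmas: for a simple vertex $v$ whose incoming edges form a $(d-1)$-frame, the ancestor set $\{x: x\leadsto v\}$ contains the facet $F$ that this frame defines, so if it induces a feasible subgraph then, having at most one nonsimple vertex, it must equal $G(F)$ by \cref{lem:feasible-subgraphs}, certifying a facet; conversely, \cref{lem:Orientation-F-Initial}(1),(3) supplies, for each facet $F$ (in particular each one containing $z$, taking $z$ as the global source and a simple vertex of $F$ as its sink), a good orientation in $\mathcal A_z$ in which $V(F)$ is initial, whence $V(F)$ \emph{is} exactly that ancestor set. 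Having all facets, recursion on each facet --- a $(d-1)$-polytope with at most one nonsimple vertex, with base case $d=3$ --- recovers the whole lattice; alternatively one reconstructs the $2$-skeleton and finishes with \cref{thm:2-skel}, whose hypothesis holds since here $\nu\le1\le d-2$.

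The main obstacle is precisely the nonsimple vertex $z$: Kalai's count and the sink recovery both break at a vertex whose figure is not a simplex. The device that removes it is to force $z$ to a source, so that $z$ is never a sink and never has to be expanded, reducing its role to that of a harmless common source; \cref{lem:feasible-subgraphs} then absorbs the last delicate point, namely certifying the vertex sets of the faces that do contain $z$. I expect the bookkeeping to make the detection step fully graph-theoretic, and the polynomial-time bound to follow from Friedman \cite{Fri09} once the restriction to $\mathcal A_z$ and the feasibility tests are incorporated.
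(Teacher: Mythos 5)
Your argument for \emph{reconstructibility} is essentially sound and close in spirit to the paper's treatment of the two-nonsimple-vertex case (Claim 1 in the proof of \cref{thm:expRecTwo}): restrict to acyclic orientations with the nonsimple vertex $z$ as a source, characterise the good ones among them as minimisers of a Kalai-type functional (your count $2^{\operatorname{indeg}(v)}$ for simple $v$ is correct, since the vertex figure is a simplex), and recover each facet as the ancestor set of its sink in a good orientation making that facet initial, certified via \cref{lem:feasible-subgraphs} and \cref{lem:Orientation-F-Initial}. As an exponential-time reconstruction this works.

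The genuine gap is the polynomial-time claim, which is part of the statement and is where essentially all of the paper's effort goes. You write that a good orientation in $\mathcal{A}_z$ ``can be produced in polynomial time, following Friedman,'' but Friedman's algorithm is not a black box that minimises the exponential functional $\sum_v 2^{\operatorname{indeg}(v)}$ over acyclic orientations --- no polynomial method for that minimisation is known, and naively searching $\mathcal{A}_z$ for a minimiser (or, in your facet-recovery step, for a good orientation making each facet initial) is exponential. Friedman's polynomial bound comes from an entirely different mechanism: one replaces Kalai's functional by $f_2^O=\sum_i h_i^O\binom{i}{2}$, sandwiches $f_2(P)$ between the size of a maximum $2$-system and $f_2^O$, and then recovers the vertex sets of the $2$-faces directly as the (unique) optimum of a linear program solved by the ellipsoid method with a combinatorial separation oracle --- no good orientation is ever explicitly constructed. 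The paper's proof consists precisely of adapting this LP machinery to tolerate one nonsimple vertex: redefining $2$-systems using only $2$-frames with simple roots, restricting the orientation class, adding the constraints $y_t=0$ for frames rooted at the nonsimple vertex (the program LP-SD-A), and re-verifying the duality and uniqueness arguments. None of this is present in your proposal, so the polynomial-time half of the theorem remains unproved. (Your final step --- once the $2$-faces are known, finish with \cref{thm:2-skel} --- does match the paper.)
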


The proof is an adaptation of proofs from \cite{JosKaiKor02} and \cite{Fri09} to the case when one nonsimple vertex exists. As such we only provide a sketch with the main ingredients.
\begin{proof}[Sketch of proof of Theorem~\ref{thm:1vertex}]
First, we consider only the set $\mathcal{A}$ of acyclic orientations of the polytope graph in which the nonsimple vertex, if present, has indegree 0. The existence of good orientations in $\mathcal{A}$ follows from \cref{lem:Orientation-F-Initial}. Second, we need a slight generalisation of the $2$-systems of \cite{JosKaiKor02}. A set $S$ of subsets of $\ver P$ is called a  {\it $2$-system} of $G(P)$ if for every set $S'\in S$ the subgraph induced by $S'$ is $2$-regular and if the vertex set of every $2$-frame of $P$ with a simple vertex as a root is contained in a unique set of $S$. Notice that the set $V_2(P)$ of vertex sets of $2$-faces of $P$ is a $2$-system of $G(P)$. Then a result, namely \cref{lem:Thm-2systems}, in the same vein as \cite[Thm.~1]{JosKaiKor02} and \cite[Thm.~4]{Kai03} can be obtained, following their proofs.
  
\begin{lemma}\label{lem:Thm-2systems}
Let  $P$ be a $d$-polytope with at most one nonsimple vertex, $S$ a 2-system of $G(P)$ and $O$ an acyclic orientation of $\mathcal A$. Then, as argued there,
\[|S|\le f_2(P)\le f_2^O:=\sum_{i=0}^d h_i^O {i \choose 2},\]
where $h_k^O$ denotes the number of vertices of $G$ with indegree $k$. The first inequality holds with equality iff $S=V_2(P)$, and the second inequality holds with equality iff $O$ is a good orientation of $\mathcal{A}$.
\end{lemma}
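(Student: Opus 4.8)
The plan is to adapt the two double-counting arguments behind Kalai's theorem~\cite{Kal88} and the abstract $2$-system results of Joswig--Kaibel--K\"orner~\cite{JosKaiKor02} and Kaibel~\cite{Kai03}, the only new point being that the single nonsimple vertex is neutralised by the restriction to $\mathcal A$. Throughout I fix $O\in\mathcal A$, write $i(v)$ for the indegree of $v$ under $O$, and use that the unique nonsimple vertex $v_0$ (if present) has $i(v_0)=0$, so it contributes $\binom{0}{2}=0$ to $f_2^O$.

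First I would prove $f_2(P)\le f_2^O$ by counting the incidences between $2$-faces $F$ and their sinks. Since $O$ is acyclic every $2$-face has at least one sink, so $f_2(P)$ is at most the total number of such incidences, with equality precisely when each $2$-face has a unique sink. Re-summing over vertices, a simple vertex $v$ is a sink of exactly those $2$-faces both of whose edges at $v$ point inward; as the $2$-faces through a simple $v$ are in bijection with the pairs of edges at $v$, there are exactly $\binom{i(v)}{2}$ of them, while $v_0$ (indegree $0$) is a sink of none. Hence the number of incidences equals $\sum_v\binom{i(v)}{2}=f_2^O$, giving $f_2(P)\le f_2^O$ with equality iff every $2$-face has a unique sink; for acyclic $O$ the latter is exactly the condition that $O$ be good, which settles the second inequality and its equality clause.

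Next I would run the parallel count for an arbitrary $2$-system $S$, now counting incidences between sets $S'\in S$ and the sinks of the $2$-regular graph induced on $S'$. Each $S'$ is a disjoint union of cycles, hence has at least one sink, so $\lvert S\rvert$ is at most the number of incidences. If a simple vertex $v$ is a sink of $S'$ then its two $S'$-neighbours form a $2$-frame rooted at $v$ with both edges incoming; by the defining property of a $2$-system, distinct members of $S$ give distinct such frames, so $v$ is the sink of at most $\binom{i(v)}{2}$ members of $S$, while $v_0$ is the sink of none. Thus $\lvert S\rvert\le\sum_v\binom{i(v)}{2}=f_2^O$ for every $O\in\mathcal A$. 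Choosing a good orientation $O^\ast\in\mathcal A$, which exists by \cref{lem:Orientation-F-Initial}, the previous paragraph gives $f_2^{O^\ast}=f_2(P)$, and hence $\lvert S\rvert\le f_2(P)$: the first inequality.

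Finally I would analyse the equality $\lvert S\rvert=f_2(P)$. It forces tightness of all the estimates for $O^\ast$: every $S'\in S$ is a single cycle with a unique (necessarily simple) sink, and for each simple $v$ every one of the $\binom{i(v)}{2}$ incoming pairs at $v$ is the sink-corner of exactly one member of $S$. I would then match each $S'$ to the unique $2$-face sharing its sink-corner by sweeping the cycle outward from the sink: at each simple vertex the $2$-frame formed with the previously visited vertex lies in a unique member of $S$ and in a unique $2$-face, forcing the two cycles to agree step by step, and so $S=V_2(P)$. Here the hypothesis on nonsimple vertices is essential: because $v_0$ has indegree $0$ it is a source of every cycle through it, so the sweep meets it only at the end of an arc and never has to make a frame decision at an unconstrained (nonsimple) root. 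I expect this uniqueness-of-the-maximal-$2$-system argument to be the main obstacle, since the definition of a $2$-system only controls frames at simple roots, and it is precisely here --- rather than in the counting --- that restricting to at most one nonsimple vertex does essential work.
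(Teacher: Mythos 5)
The paper gives no proof of this lemma at all: it is stated ``as argued there'' and explicitly deferred to the proofs of \cite[Thm.~1]{JosKaiKor02} and \cite[Thm.~4]{Kai03}, so you are supplying an argument the authors only cite. Your two counting arguments are correct and are the intended ones: counting incidences between $2$-faces (resp.\ members of $S$) and their sinks, noting that the nonsimple vertex has indegree $0$ under any $O\in\mathcal A$ and hence is never a sink of a $2$-face or of an induced $2$-regular subgraph, yields both inequalities, and a good orientation in $\mathcal A$ (from \cref{lem:Orientation-F-Initial}) converts $|S|\le f_2^{O^\ast}$ into $|S|\le f_2(P)$. (A small caveat: equality in the second inequality is equivalent to ``every $2$-face has a unique sink'', which is implied by goodness but is a priori weaker than the paper's notion of good; that imprecision is already in the statement, not introduced by you.)

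The genuine gap is the implication $|S|=f_2(P)\Rightarrow S=V_2(P)$. You rightly flag it as the crux, but the ``sweep'' does not close it. After the equality analysis you know that every pair of in-edges at a simple vertex is the sink-corner of exactly one member of $S$ and of exactly one $2$-face, so a $2$-face $W$ with sink $x$ and sink-frame $(x,a,b)$ is matched to the unique $S'\in S$ containing that frame. But at the next vertex $a$, the frame $(a,x,c)$ (with $c$ the other neighbour of $a$ in $S'$) lies in the unique member $S'$ and in \emph{some} unique $2$-face --- nothing forces that $2$-face to be $W$. The other edge of $W$ at $a$ may go to $a'\ne c$; the frame $(a,x,a')$ then simply lives in a different member of $S$, and the ``each frame determines one object of each kind'' properties you invoke cannot force the two cycles to take the same turn. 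A correct argument needs an extra input. One route in the spirit of this paper: choose for $W$ a good orientation in which $V(W)$ is \emph{initial}; then every vertex of the matched $S'$ reaches the common sink by a directed path inside $S'$, initiality prevents such a path from entering $W$ from outside, so $V(S')\subseteq V(W)$, and a $2$-regular induced subgraph on a subset of the cycle $V(W)$ must be the whole cycle. Even this needs care with one nonsimple vertex: if $v_0\notin W$ one cannot make $W$ initial while keeping $v_0$ at indegree $0$ with a linear functional, so the adaptation is not routine --- which is precisely why ``sweeping outward'' cannot stand in for the argument of \cite{JosKaiKor02}.
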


Next we present all the relevant programs to compute $V_2(P)$, which are those presented in \cite[Sec.~4]{Fri09}, with some changes.  

Let $t$ denote the 2-frame $(t_0,t_1,t_2)$ with root $t_0$ and let $T$ denote the set of all 2-frames in $G$ in which the nonsimple vertex $v$ is not a root. Let $W$ be the set of 2-regular induced subgraphs in $G$. The integer program \text{IP-S} finds a 2-system of maximum cardinality.  
\begin{align*}
\begin{alignedat}{4}
&(\text{IP-S}) \quad && \text{max}       && \sum_{w\in W} x_{w} & \\
&              \quad && \text{s.t.} \quad&& \forall t\in T, \sum_{w\in \delta(t)} x_{w}= 1,& \\
&              \quad &&                  &&x_{w}\in \{0,1\}.
\end{alignedat}&&
\begin{alignedat}{4}
&(\text{LP-S}) \quad && \text{max}       && \sum_{w\in W} x_{w} & \\
&              \quad && \text{s.t.} \quad&& \forall t\in T, \sum_{w\in \delta(t)} x_{w}\le 1,& \\
&              \quad &&                  &&x_{w}\ge 0.
\end{alignedat}
\end{align*}
 Here $\delta(t)$ is the set of all elements in $W$ containing the 2-frame $t$. As in \cite[Sec.~4]{Fri09}, we allow 2-regular induced subgraphs which are union of cycles, since the maximum will inevitably occur with positive weight only on single cycles. Note that in \text{IP-S} the set $W$ may have exponential size, but the set $T$, and hence the number of equations, has only polynomial size. We relax and dualise \text{IP-S}, obtaining \text{LP-S} and \text{LP-SD}, respectively.
\begin{align*}
\begin{alignedat}{4}
&(\text{LP-SD}) \quad && \text{min}       && \sum_{t\in T} y_{t} & \\
&              \quad && \text{s.t.} \quad&& \forall w\in W, \sum_{t\in w} y_{t}\ge 1,& \\
&              \quad &&                  &&y_{t}\ge 0.
\end{alignedat}&&
\begin{alignedat}{4}
&(\text{IP-$f_2$}) \quad && \text{min}       && \sum_{t\in T} y_{t} & \\
&              \quad && \text{s.t.} \quad&& \forall w\in W, \sum_{t\in w} y_{t}\ge 1,& \\
&              \quad &&                  &&y_t\in\{0,1\},\\
&              \quad &&                  &&\text{$y_{t}$ arises from $O\in \mathcal A$}.
\end{alignedat}
\end{align*}

 Let \text{IP-SD} be the related binary-integer program for \text{LP-SD}: replace $y_t\ge 0$ with $y_t\in\{0,1\}$. 

Consider an acyclic orientation $O\in \mathcal{A}$ and let $y_t=1$ represent the case where the root of the 2-frame $t$ is a sink of $t$. Then, according to \cref{lem:Thm-2systems}, the integer program \text{IP-$f_2$} finds an $\aof$-orientation $O\in \mathcal{A}$ of $G$.

As in \cite[Thm.~2]{Fri09}, applying \cref{lem:Thm-2systems} and the strong duality theorem of linear programming to this sequence of optimisation problems gives the following similar result, with the same proof.

\begin{lemma}\label{lem:optProb}
Let $P$ be a $d$-polytope with at most one nonsimple vertex and let $G$ denote its graph. Then the aforementioned optimisation problems {\text IP-S}, \text{ LP-S}, {\text LP-SD}, {\text IP-SD} and {\text IP-$f_2$} all have the same optimal value.
\end{lemma}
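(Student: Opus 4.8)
The plan is to show that all five optimisation problems --- IP-S, LP-S, LP-SD, IP-SD, and IP-$f_2$ --- share a common optimal value, by establishing a chain of (in)equalities that pins every value to $f_2(P)$. The backbone is the combinatorial identity $|S|\le f_2(P)\le f_2^O$ from \cref{lem:Thm-2systems}, interpreted through the optimisation problems, together with LP duality. First I would read off the optimal values of IP-S and LP-SD directly from \cref{lem:Thm-2systems}: since a feasible point of IP-S is exactly a $2$-system $S$ (the constraint $\sum_{w\in\delta(t)}x_w=1$ forces each simple-rooted $2$-frame into a unique chosen $2$-regular subgraph), the maximum of IP-S equals $\max_S|S|=f_2(P)$, attained only at $S=V_2(P)$. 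Likewise a $0/1$-feasible point of IP-$f_2$ encodes an orientation $O\in\mathcal A$ via $y_t=1$ when the root of $t$ is a sink of $t$, so its objective $\sum_{t\in T}y_t$ counts exactly the $2$-frames with their root as sink, which by the counting in \cref{lem:Thm-2systems} equals $f_2^O=\sum_i h_i^O\binom{i}{2}$, minimised at $f_2(P)$ precisely when $O$ is a good orientation in $\mathcal A$.

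The middle of the argument is the LP sandwich. Relaxing IP-S to LP-S can only increase (or preserve) the optimum, so $\mathrm{val}(\text{IP-S})\le\mathrm{val}(\text{LP-S})$; by strong LP duality $\mathrm{val}(\text{LP-S})=\mathrm{val}(\text{LP-SD})$; and tightening LP-SD to its $0/1$ version IP-SD can only increase the minimum, so $\mathrm{val}(\text{LP-SD})\le\mathrm{val}(\text{IP-SD})$. The chain to close is then
\begin{align*}
f_2(P)=\mathrm{val}(\text{IP-S})\le\mathrm{val}(\text{LP-S})=\mathrm{val}(\text{LP-SD})\le\mathrm{val}(\text{IP-SD})\le\mathrm{val}(\text{IP-}f_2)=f_2(P),
\end{align*}
forcing equality throughout. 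The one inequality requiring a genuine argument is $\mathrm{val}(\text{IP-SD})\le\mathrm{val}(\text{IP-}f_2)$: since IP-$f_2$ is IP-SD with the extra restriction that the $0/1$ vector $y$ must arise from an acyclic orientation in $\mathcal A$, every IP-$f_2$-feasible point is IP-SD-feasible, so the more constrained minimisation yields a value at least as large. Thus $\mathrm{val}(\text{IP-SD})\le\mathrm{val}(\text{IP-}f_2)=f_2(P)$, squeezing the whole chain to the single value $f_2(P)$.

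The step I expect to be the main obstacle is verifying that the extremes of the chain really both equal $f_2(P)$ in the presence of the single nonsimple vertex, i.e.\ that \cref{lem:Thm-2systems} applies cleanly with $v$ excluded as a root. The restriction $\mathcal A$ (nonsimple vertex has indegree $0$) and the restriction of $T$ to $2$-frames \emph{not} rooted at $v$ must interact correctly: I would confirm that a good orientation in $\mathcal A$ exists (guaranteed by \cref{lem:Orientation-F-Initial}) so that the bound $f_2(P)\le f_2^O$ is attained, and that the counting $\sum_i h_i^O\binom{i}{2}$ still computes $f_2^O$ when only simple-rooted frames are counted --- the nonsimple vertex as a sink of some $2$-face contributes through the $\binom{i}{2}$ term at its own indegree $i$, and the indegree-$0$ constraint is exactly what removes the problematic frames at $v$ from $T$. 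Once this bookkeeping is confirmed to match \cite[Thm.~2]{Fri09} with the one nonsimple vertex handled, the proof follows verbatim from the cited argument, so it suffices to note that the chain above specialises Friedman's with no further modification.
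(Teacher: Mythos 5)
Your proposal is correct and takes essentially the same approach as the paper, which proves the lemma by invoking Friedman's Theorem~2 together with \cref{lem:Thm-2systems} and strong LP duality; your chain
$f_2(P)=\mathrm{val}(\text{IP-S})\le\mathrm{val}(\text{LP-S})=\mathrm{val}(\text{LP-SD})\le\mathrm{val}(\text{IP-SD})\le\mathrm{val}(\text{IP-}f_2)=f_2(P)$
is exactly the intended argument. Your closing check --- that restricting to $\mathcal A$ (so the nonsimple vertex has indegree $0$ and hence is never a local sink of any $2$-regular subgraph) is what makes both endpoints equal $f_2(P)$ and keeps every $O\in\mathcal A$ feasible for IP-$f_2$ --- is precisely the point where the one-nonsimple-vertex case needs care, and you have handled it.
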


We proceed by solving {\text LP-SD}. The program {\text LP-SD} has a polynomial number of variables, an exponential number of constraints and  all the constraints with polynomially bounded size.  As in \cite[Sec.~5]{Fri09},  this problem can be solved using the ellipsoid method. An important feature of the ellipsoid method is that it is not necessary to have an explicit list of all inequalities ready at hand. It suffices to have a ``separation oracle'' which, given a vector $\vec y$, decides whether or not $\vec y$ is a solution of the system. If $\vec y$ is a solution, it returns ``yes'', otherwise it returns one (arbitrary) inequality of the system that is violated by $\vec y$, that is, an inequality which separates $\vec y$ from the solution set. Furthermore, if the separation oracle runs in polynomial time then so does the ellipsoid method. 

Our separation algorithm reduces to that of Friedman's \cite[Sec.~5]{Fri09}   after we  produce a new linear program \text{LP-SD-A} equivalent to \text{LP-SD}. The new program \text{LP-SD-A} considers the set $T'$ of all 2-frames in $G$, not only those in which a simple vertex is a root. It adds a new variable $y_t$  and a new constraint $y_t=0$ for every 2-frame $t$ with the nonsimple vertex $v$ as the root. 

\begin{alignat*}{4}
&(\text{LP-SD-A}) \quad && \text{min}       && \sum_{t\in T'} y_{t} & \\
&              \quad && \text{s.t.} \quad&& \forall w\in W, \sum_{t\in w} y_{t}\ge 1,& \\
&              \quad &&                  &&\forall t\in T' \text{such that $t$ has $v$ as a root,}  \;  y_{t}=0,\\
&              \quad &&                  &&y_{t}\ge 0.
\end{alignat*}
  
It is not difficult to see that  \text{LP-SD-A} is equivalent to \text{LP-SD}; that is, any feasible solution of \text{LP-SD-A} corresponds to a feasible solution of \text{LP-SD}, and vice versa.

With a separation algorithm running in polynomial time at hand, we have that the program {\text LP-SD}, and thus, the programs {\text LP-S} and {\text IP-S}, can be solved in polynomial time.  

It would only remain to show that the solution obtained by the program \text{LP-S} is unique and thus a solution vector $\vec x^*$ corresponds to the incidence vector of the 2-faces of the polytope.   The proof of this fact proceeds mutatis mutandis as in the proof of \cite[Thm.~4]{Fri09}.

Finally, with the 2-faces available, we can reconstruct the vertex-facet incidences of the polytope using \cref{thm:2-skel} in linear time. \end{proof}

\subsection{Polytopes with two nonsimple vertices}\label{subSec:one-two-vertices}

\begin{theorem}\label{thm:expRecTwo} If $P$ is a $d$-polytope with two nonsimple vertices, then the graph of $P$ determines the entire combinatorial structure of $P$.
\end{theorem}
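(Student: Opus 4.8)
The plan is to reduce the reconstruction of the full face lattice of $P$ to the detection of its $2$-faces from $G(P)$, and then to detect those $2$-faces by adapting the Kalai--Friedman counting machinery to the restricted family of acyclic orientations supplied by \cref{lem:Orientation-F-Initial}. For the reduction, note first that for $d=3$ every polytope is determined by its graph, so I may assume $d\ge 4$. Then $P$, having at most two nonsimple vertices, has at most $2\le d-2$ nonsimple vertices in each facet, so by \cref{thm:2-skel} the $2$-skeleton already determines the entire face lattice. Hence it suffices to recover, from the abstract graph alone, which cycles bound $2$-faces.

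The structural observation that drives everything is that, since $P$ has only two nonsimple vertices and every $2$-face is a polygon on at least three vertices, every $2$-face contains a simple vertex; moreover at a simple vertex the vertex figure is a simplex, so every pair of edges at a simple vertex spans a $2$-face. Consequently each $2$-face is defined (in the sense of \cref{prop:Kaibel}) by a $2$-frame with a simple root, and the detection problem is exactly the one solved for simple polytopes by Friedman's $2$-system programs, now with the two nonsimple vertices $v_1,v_2$ to be controlled. I would work inside the family $\mathcal A$ of acyclic orientations in which $v_1$ is the global source and $v_2$ the global sink; good orientations of this type exist by \cref{lem:Orientation-F-Initial} applied to the disjoint faces $F_i=\{v_1\}$ and $F_j=\{v_2\}$, while condition~(3) of that lemma together with \cref{rmk:orientation-composition} provides the freedom to separate prescribed vertices when comparing orientations.

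The counting then proceeds as in \cref{lem:Thm-2systems}. For $O\in\mathcal A$, summing over vertices $w$ the number $\binom{\mathrm{indeg}(w)}{2}$ of incoming edge-pairs and splitting each pair according to whether it spans a $2$-face yields
\[\sum_{w}\binom{\mathrm{indeg}^O(w)}{2}=\sum_{W\in V_2(P)} s_O(W)+c,\]
where $s_O(W)\ge 1$ is the number of local sinks of the polygon $W$ and $c$ is the number of non-edges of the vertex figure of $v_2$. The key point is that $c$ is independent of $O\in\mathcal A$: every simple vertex and the source $v_1$ contribute no defect, while the forced sink $v_2$ always contributes the same fixed amount. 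Thus the minimisers of the objective over $\mathcal A$ are exactly the orientations with a unique sink in every $2$-face, and Friedman's programs output a candidate $2$-system.

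The main obstacle is precisely this additive constant $c$ localised at the nonsimple sink $v_2$. Unlike the one-vertex case of \cref{thm:1vertex}, where the single nonsimple vertex is made a source and contributes nothing, here the orientation optimum overshoots $f_2(P)$ by $c$, so the clean equality between the $2$-system optimum and the orientation optimum that forces the maximum $2$-system to equal $V_2(P)$ no longer holds verbatim; I expect this to be the crux. To overcome it I would run the argument symmetrically in $\mathcal A$ and in the family $\mathcal A'$ with the roles of $v_1,v_2$ reversed: a $2$-face incident to $v_2$ has $v_2$ as a source in $\mathcal A'$ and is therefore counted without defect there, and symmetrically for $v_1$ in $\mathcal A$, so that every $2$-face is detected cleanly in at least one family and the two outputs can be reconciled to pin down $V_2(P)$ exactly; a $2$-face through both $v_1$ and $v_2$ still carries a simple defining vertex and is handled through its simple $2$-frame. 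An alternative route, following the paper's second proof, is to truncate one nonsimple vertex so as to reduce the reconstruction to the one-nonsimple-vertex assertion \cref{thm:1vertex}, the delicate point being to certify that the relevant truncation is recoverable from the data. Either way, once $V_2(P)$ is known, \cref{thm:2-skel} completes the reconstruction.
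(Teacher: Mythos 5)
Your reduction to detecting $V_2(P)$ and then invoking \cref{thm:2-skel} is sound, and your accounting identity on the family $\mathcal A$ is correct: with $v_1$ a global source and $v_2$ a global sink the defect $c$ at $v_2$ is constant on $\mathcal A$, so the minimisers of $\sum_w\binom{\mathrm{indeg}^O(w)}{2}$ over $\mathcal A$ are exactly the orientations with a unique sink in every $2$-face. But the proof breaks precisely where you say you ``expect the crux'' to be, and the fix you sketch does not close the gap. First, the $2$-system inequality $|S|\le f_2^O$ is proved by injecting each member of $S$ into the $2$-frame at one of its sinks; once $v_2$ is forced to be a global sink, every induced $2$-regular subgraph through $v_2$ has $v_2$ as a sink, two distinct members of $S$ may share their sink $2$-frame at the nonsimple vertex $v_2$, and the injection --- hence the duality chain and the uniqueness argument of Friedman that identifies the optimal $2$-system with $V_2(P)$ --- is no longer available. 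Second, the proposed ``reconciliation'' of the two families: $\mathcal A$ can at best certify the $2$-faces avoiding $v_2$ and $\mathcal A'$ those avoiding $v_1$, so the $2$-faces containing \emph{both} nonsimple vertices are cleanly detected by neither; your remark that such a face ``is handled through its simple $2$-frame'' does not help, because in a good orientation of either family its unique sink is a nonsimple vertex, so none of its simple $2$-frames is a sink frame and the counting mechanism never sees it. Indeed, the paper records the polynomial-time version of exactly this route as an open problem, and even allowing exponential time you give no argument for why the $2$-regular subgraphs through $v_1$ and $v_2$ that survive the optimisation are exactly the $2$-faces.

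The paper's two proofs sidestep this difficulty. The first works at the level of facets rather than $2$-faces: it partitions the facets by which of $u,v$ they contain and, for each of the four classes, designs via \cref{lem:Orientation-F-Initial} a family of acyclic orientations under which the relevant facets appear as initial feasible subgraphs with a unique \emph{simple} sink, the counting function being $h_{d-1}^O+dh_d^O$ over simple vertices (with a correction term $t^O_{u-v}$ in the class containing neither vertex). The second proof does truncate, as in your alternative route, but it first recovers the facets containing exactly one of $u,v$ by importing Claims 1 and 2 of the first proof, and it handles the $2$-faces containing both $u$ and $v$ by a separate Kalai-type argument ordering $u$ first and $v$ second (plus a degree count to recover a possibly missing edge when $uv\notin E(P)$). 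Your proposal supplies neither of these ingredients, so as written it has a genuine gap at its central step.
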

In this case the reconstruction algorithms we suggest run only in exponential time. Here is our first proof.
\begin{proof}[Proof of Theorem~\ref{thm:expRecTwo}]
Let us assume that $d\ge4$, as the result is trivial for smaller $d$. Denote by $u$ and $v$ the nonsimple vertices of $P$.
Partition the facets of $P$ into four families: let $\mathcal{F}_{u-v}$ (resp. $\mathcal{F}_{v-u}$) denote the family of facets containing $u$ and not $v$ (resp. $v$ and not $u$) and $\mathcal{F}_{\emptyset}$ (resp. $\mathcal{F}_{vu}$) the family of facets containing none of (resp. both) $u$ and $v$. We find these families in the order $\mathcal{F}_{u-v}$, $\mathcal{F}_{v-u}$, $\mathcal{F}_{\emptyset}$, $\mathcal{F}_{vu}$, from first to last, as given in the following four Claims.

Denote by $\mathcal{H}_u$ the set of feasible subgraphs of $G(P)$ which contain $u$ but not $v$. Denote by  $\mathcal{A}_u$  the set of all acyclic orientations of $G(P)$ in which (1) the nonsimple vertex $u$ has indegree 0, (2) the nonsimple vertex $v$ has outdegree 0, and (3) some subgraph $H_u$ in $\mathcal{H}_u$ is initial. It follows that $H_u$ has a sink which is a simple vertex.

{\bf Claim 1 (find $\mathcal{F}_{u-v}$).} A feasible subgraph $H_u$ in $\mathcal{H}_u$  is the graph of a facet of $P$ containing $u$ but not $v$ iff (1) $H_u$ is initial with respect to a good orientation $O$ in $\mathcal{A}_u$, and (2) $H_u$ has a unique sink which is a simple vertex.

\begin{proof}  First consider a facet $F_u$ containing $u$ but not $v$ (such facet clearly exists). Applying \cref{lem:Orientation-F-Initial} to the faces $F_{u}$ and $v$ we get a good orientation of $G(P)$ in which the vertices of $F_u$ are initial, $u$ is the global minimum and $v$ is the global maximum. Under this orientation, taking $H_u=G(F_u)$ ensures that $\mathcal{A}_u$ is nonempty.

We prove the converse. Let $O\in \mathcal{A}_u$ and let $h_k^O$ denote the number of simple vertices of $G$ with indegree $k$ w.r.t. $O$. Define\[f^O_{u}:=h^O_{d-1}+dh_d^O.\]
The function $f^O_{u}$ counts the number of pairs $(F,w)$, where $F$ is a facet of $P$ and $w$ is a simple sink in $F$ w.r.t. the orientation $O$ in $\mathcal{A}_u$. If $w$ is a simple vertex in $P$ with indegree $k$, then $w$ is a sink in ${k\choose d-1}$ facets of $P$. Since the orientation is acyclic, every facet has a sink. Furthermore, every facet containing $v$ has $v$ as a sink, and $u$ is not a sink in any facet.

Let $H_u\in \mathcal{H}_u$, and let $x$ be the simple sink in $H_u$ with respect to $O$. Suppose $H_u$ does not represent  the facet $F$ of $P$ containing $x$ and the $d-1$   edges in $H_{u}$ incident to $x$, namely $H_u\neq G(F)$. Then, by \cref{lem:feasible-subgraphs}, there is a vertex of $F$ not in $H_u$. Since $H_u$ is initial with respect to $O$, the facet $F$ would contain two sinks, one of them being $x$.
Consequently, as there is a good orientation in $\mathcal{A}_u$ and a subgraph in $\mathcal{H}_u$ representing a facet, we have that \[\min_{O\in \mathcal{A}_u} f^O_{u}=f_{d-1}-f_{d-1}^{v},\] where $f_{d-1}$  (resp.~ $f_{d-1}^{v}$) denotes the number of facets in $P$ (resp. containing $v$).  Also, the orientations in $\mathcal{A}_u$ minimising $f_{u}^O$ are exactly the good orientations in $\mathcal{A}_u$.

Let $x$ be the simple sink in $H_u$ with respect to a good orientation $O$ in $\mathcal{A}_u$. Then $x$ defines a unique facet $F$ of $P$, and all the other vertices of $F$ are smaller than $x$ with respect to the ordering induced by $O$. Since $H_u$ is an initial set in $O$ and since there is a directed path in $G(F)$ from any vertex of $G(F)$ to $x$ (cf.~\cref{rmk:acycOrient}),  we must have $\ver F \subseteq V(H_u)$, and we are done by \cref{lem:feasible-subgraphs}. \end{proof}

Exchanging the roles of $u$ and $v$, define $\mathcal{H}_v$ and $\mathcal{A}_v$ similarly to $\mathcal{H}_u$ and $\mathcal{A}_u$. By symmetry we also get the following claim.

{\bf Claim 2 (find $\mathcal{F}_{v-u}$).} A feasible subgraph $H_v\in \mathcal{H}_v$  is the graph of a facet of $P$ containing $v$ but not $u$ iff (1) $H_v$ is initial with respect to some good orientation $O\in \mathcal{A}_v$ and (2) has a unique sink which is a simple vertex.

Running through all the good orientations in  $\mathcal{A}_u$ or $\mathcal{A}_v$, we recognise all the graphs of facets in $\mathcal{F}_{u-v}$ and in $\mathcal{F}_{v-u}$.
Let $f_{d-1}^{u-v}:=  |\mathcal{F}_{u-v}|$, $f_{d-1}^{v-u}:=  |\mathcal{F}_{v-u}|$, $f_{d-1}^{\emptyset}:=  |\mathcal{F}_{\emptyset}|$ and $f_{d-1}^{uv}:=  |\mathcal{F}_{uv}|$.
Clearly,
\begin{equation}
\label{eq:fd-1}
f_{d-1}=f_{d-1}^{u-v}+f_{d-1}^{v-u}+f_{d-1}^{vu}+f_{d-1}^{\emptyset}.
\end{equation}

Since the number $f_{d-1}-f_{d-1}^{v}$ is known (it is the minimum of $f^{O}_{u}$ over $O\in \mathcal{A}_u$), and since $f_{d-1}^{v} =f_{d-1}^{vu} +f_{d-1}^{v-u}$, it follows that the number $f_{d-1}^{\emptyset}$ is known; if $f_{d-1}^{\emptyset}=0$ then $\mathcal{F}_{\emptyset}=\emptyset$.

Assume then $f_{d-1}^{\emptyset}>0$. We show next how to recognise the facets in $\mathcal{F}_{\emptyset}$. Denote by $\mathcal{H}_\emptyset$ the set of feasible subgraphs of $G(P)$ which contain neither $u$ nor $v$, and by  $\mathcal{A}_\emptyset$  the set of all acyclic orientations of $G(P)$ in which (1)  the nonsimple vertex $v$ has outdegree 0, and (2) some subgraph $H_\emptyset$ in $\mathcal{H}_\emptyset$ is initial. It follows that $H_\emptyset$ has a sink which is a simple vertex.

Define an {\it almost good orientation} as an acyclic orientation in which every facet with a simple sink has a unique sink.

{\bf Claim 3 (find $\mathcal{F}_{\emptyset}$).}
Let $f_{d-1}^{\emptyset}>0$.
A feasible subgraph $H_\emptyset \in \mathcal{H_\emptyset}$  is the graph of a facet of $P$ containing neither $u$ nor $v$ iff (1) $H_\emptyset$ is initial with respect to some \emph{almost} good orientation $O\in \mathcal{A}_\emptyset$ and (2) $H_\emptyset$ has a unique sink which is a simple vertex.

 \begin{proof} Consider a facet $F_\emptyset$ containing neither $u$ nor $v$; such facet exists by assumption. 
Applying  \cref{lem:Orientation-F-Initial} to the faces $F_{\emptyset}$ and $v$, we get a good orientation of $G(P)$ in which the vertices of $F_\emptyset$  are initial and $v$ is the global maximum.  This proves the ``only if'' part of the claim.

 Let $O\in \mathcal{A}_\emptyset$, and as before $h_k^O$ denote the number of simple vertices of $G$ with indegree $k$. Let $t^O_{u- v}$ denote the number of valid $(d-1)$-frames of $u$ which are contained in facets which do not contain $v$ and have $u$ as a sink. Since we know all the facets containing $u$ but not $v$, we can compute $t^O_{u- v}$ from $O$. Define\[f^O_{\emptyset}:=h^O_{d-1}+dh_d^O+t^O_{u-v}.\]
The function $f^O_{\emptyset}$ counts the number of pairs $(F,w)$, where  $F$ is a facet of $P$, $w$ is sink of $F$ and either $w$ is simple or $w=u$. Any facet containing $v$ has $v$ as a sink.
Consequently, as there is a good orientation in  $ \mathcal{A}_\emptyset$ and a subgraph $H_\emptyset\in \mathcal{H}_\emptyset$ representing a facet, we have that \[\min_{O\in \mathcal{A}_\emptyset} f^O_{\emptyset}=f_{d-1}-f_{d-1}^{v}.\]
Note that an orientation $O$ of $\mathcal{A}_\emptyset$ minimising $f_{\emptyset}^O$ is not necessarily a good orientation; there may be  a facet in which both $u$ and $v$ are sinks, but facets with a simple sink or facets not containing $v$ must have a unique sink.

 The proof now proceeds mutatis mutandis as in the proof of Claim 1. Let $x$ be the simple sink in $H_{\emptyset}$ with respect to $O$, then $x$ together with the $d-1$   edges in $H_\emptyset$ incident to $x$ define a unique facet $F$ of $P$, where all the other vertices of $F$ are smaller than $x$ with respect to the ordering induced by $O$. Since $H_{\emptyset}$ is an initial set in $O$ and since there is a directed path in $G(F)$ from any other vertex of $G(F)$ to $x$ (cf.~\cref{rmk:acycOrient}),  and we must have $\ver F \subseteq V(H_{\emptyset})$, with the result following from \cref{lem:feasible-subgraphs}.
\end{proof}
Running through all the orientations in  $\mathcal{A}_\emptyset$ minimising $ f^O_{\emptyset}$ we recognise $\mathcal{F}_\emptyset$.

It remains to recognise $\mathcal{F}_{uv}$.
We first find out whether or not the number $f^{vu}_{d-1}=0$.
Note that each facet contains a simple $(d-1)$-frame, and any simple $(d-1)$-frame is contained in a (unique) facet.
Thus, $f^{vu}_{d-1}>0$ iff there exists a simple $(d-1)$-frame not contained in any of the graphs of the facets in $\mathcal{F}_{u-v}\cup\mathcal{F}_{v-u}\cup \mathcal{F}_\emptyset$.

Assume $f_{d-1}^{vu}>0$. Recognising the facets in $\mathcal{F}_{uv}$ is done similarly to the previous  cases. Denote by $\mathcal{H}_{vu}$ the set of feasible subgraphs which contain both $u$ and $v$, and by  $\mathcal{A}_{vu}$  the set of all acyclic orientations of $G(P)$ in which   some subgraph $H_{vu}$ in $\mathcal{H}_{vu}$ is initial with a unique sink which is a simple vertex.

{\bf Claim 4.}
Assume $f_{d-1}^{vu}>0$.
 Then a feasible subgraph $H_{vu}\in \mathcal{H}_{vu}$  is the graph of a facet containing both $u$ and $v$ iff (1) $H_{vu}$ is initial with respect to some good orientation $O$ in $\mathcal{A}_{vu}$, and (2) has a unique sink which is a simple vertex.
\begin{proof}
Applying  \cref{lem:Orientation-F-Initial} to a facet $F_{vu}\in \mathcal{F}_{vu}$, we get a good orientation in which the vertices of $F_{vu}$ are initial and the sink in $F_{vu}$ is a simple vertex. This proves the ``only if'' part.

For the ``if'' part, for any $O\in \mathcal{A}_{uv}$ consider  the function \[f^O_{vu}:=h^O_{d-1}+dh_d^O.\] Its minimum value over $\mathcal{A}_{vu}$ is $f_{d-1}$. Any orientation minimising $f^O_{vu}$ must be good. For any orientation $O$ minimising $f^O_{vu}$, proceed as in the ``if'' part in the proof of Claim 1 to conclude the proof.\end{proof}
Thus, by Claims 1--4 we have found all the facets of $P$ from $G(P)$.
\end{proof}

\begin{corollary}\label{cor:1-skel}
$\beta_{1,4}=2$, and for any $d>4$, $\beta_{1,d}\ge 2$.
\end{corollary}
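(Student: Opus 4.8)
The plan is to read off both inequalities from results already established in the paper, since the corollary is a direct consequence of \cref{thm:expRecTwo} together with the construction of \cref{prop:2-skel-Const}; there is essentially nothing new to prove beyond assembling these pieces and checking the bookkeeping of indices.

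First I would establish the lower bound $\beta_{1,d}\ge 2$ for every $d\ge 4$. By definition, $\beta_{1,d}\ge 2$ asserts that every $d$-polytope with at most two nonsimple vertices is reconstructible from its graph. The case of exactly two nonsimple vertices is precisely the content of \cref{thm:expRecTwo}; the case of one nonsimple vertex is \cref{thm:1vertex}, and the case of no nonsimple vertex is the Blind--Mani--Kalai theorem for simple polytopes. Hence any $d$-polytope with at most two nonsimple vertices is graph-reconstructible, which gives $\beta_{1,d}\ge 2$ for all $d\ge 4$, establishing the second assertion in full and half of the first.

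Next I would establish the matching upper bound $\beta_{1,4}\le 2$. Here I would instantiate \cref{prop:2-skel-Const} at $d=4$: it yields a pair of $4$-polytopes $Q_4^1$ and $Q_4^2$, each with $d-1=3$ nonsimple vertices, having isomorphic $(d-3)$-skeleta but nonisomorphic face lattices. The key observation is that for $d=4$ the index $d-3$ collapses to $1$, so ``isomorphic $(d-3)$-skeleta'' means exactly that $Q_4^1$ and $Q_4^2$ have isomorphic graphs. Thus there exist two combinatorially distinct $4$-polytopes with three nonsimple vertices and the same $1$-skeleton, so not every $4$-polytope with three nonsimple vertices can be reconstructed from its graph. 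By the definition of $\beta_{1,4}$ as the largest $j$ for which graph-reconstruction is guaranteed whenever at most $j$ nonsimple vertices are present, this forces $\beta_{1,4}\le 2$.

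Combining the two bounds gives $\beta_{1,4}=2$, and together with the lower bound from the first step this yields $\beta_{1,d}\ge 2$ for all $d>4$, as claimed. I do not anticipate any genuine obstacle: the single point deserving care is the index bookkeeping just noted, namely that the collapse $d-3=1$ at $d=4$ is exactly what upgrades the pair of \cref{prop:2-skel-Const} from a $2$-skeletal counterexample to a graph-theoretic one, thereby pinning down the upper bound in the regime $k=1$ and closing the gap to the lower bound supplied by \cref{thm:expRecTwo}.
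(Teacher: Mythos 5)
Your proposal is correct and follows exactly the paper's (implicit) argument: the lower bound $\beta_{1,d}\ge 2$ is assembled from \cref{thm:expRecTwo}, \cref{thm:1vertex} and the Blind--Mani/Kalai theorem, while the upper bound $\beta_{1,4}\le 2$ comes from the pair $Q_4^1$, $Q_4^2$ of \cref{prop:2-skel-Const}, whose isomorphic $(d-3)$-skeleta are precisely isomorphic graphs when $d=4$. The index bookkeeping you flag is exactly the point the paper relies on.
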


\begin{problem}
Can the proof of Theorem~\ref{thm:expRecTwo} be modified to reconstruct the $2$-faces in polynomial time, as in the case of one nonsimple vertex (Theorem~\ref{thm:1vertex})? This would give a polynomial time reconstruction algorithm for the facets, in the presence of two nonsimple vertices.
\end{problem}

\subsection{Reconstruction via truncation}
We now present a second proof of Theorem~\ref{thm:expRecTwo}, again suffering an exponential running time, based on truncation of polytopes.

Let $P$ be a polytope with face $F$.  {\it $P$ truncated at $F$} \cite[p.~76]{Bro83} is the polytope $P'$ obtained by intersecting $P$ with a halfspace $H^{+}$ which does not contain the vertices of $F$ and whose interior contains the vertices of $P$ that are not contained in $F$. Let $H$ denote the hyperplane bounding $H^{+}$. The face lattices of $P$ and of $P'$ determine each other; for our purposes, we need the following parts of this statement, collected in a lemma. For a polytope $P$ let $V(P)$ and $E(P)$  denote the sets of its vertices and edges, respectively.

\begin{lemma}\label{lem:truncetion_and_lattices}
Let $P'$ be the $d$-polytope $P$ truncated at a face $F$.
\begin{enumerate}[(a)]
\item The vertices of $P'$ are of two types: the vertices in $V(P)\setminus V(F)$ and a vertex $w_{xy}$ for each edge $xy$ of $P$ with a vertex $x$ in $V(F)$ and a vertex $y$ in $V(P)\setminus V(F)$. 

\item The edges of $P'$ are of three types: (1) the edges $y_{1}y_{2}$ in $P$ with $y_{1},y_{2}\in V(P)\setminus V(F)$, (2) the edges $yw_{xy}$ with $y \in V(P)\setminus V(F)$ and $w_{xy}\in H\cap P$, and (3) the edges $w_{x_{1}y_{1}}w_{x_{2}y_{2}}$ with $x_{1},y_{1}, x_{2}, y_{2}$ contained in a 2-face of $P$. In particular, $G(P)$ and the $2$-faces of $P$ containing at least one vertex from $F$ are enough to determine $G(P')$.

\item The facets of $P'$ are of two types: The facet $H\cap P$ and the ``old'' facets of $P$, except $F$ if it is indeed a facet; that is, the facets $J':=H^{+}\cap J$, where $J$ is a facet of $P$ possibly other than $F$ . Hence, given the vertex set of a facet $J'$ of $P'$ other than $H\cap P$, we obtain the vertex set of the corresponding facet $J$ of $P$ by replacing each vertex $w_{xy}$ in $J'$ with the corresponding vertex $x$ in $F$. Consequently, all facets of $P$ are thus obtained.

\item For any vertex $w_{xy}\in P'$ with $x\in V(F)$ and $y\in V(P)\setminus V(F)$, if $y$ has degree $d$ in $G(P)$ then $w_{xy}$ has degree $d$ in $G(P')$.
\end{enumerate}
\end{lemma}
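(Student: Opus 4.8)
The plan is to treat the truncation as the halfspace intersection $P'=P\cap H^{+}$ and to read off every face of $P'$ from the faces of $P$ by means of the linear functional that separates $F$ from the rest. Choose a linear functional $\ell$ with $H=\{\ell=c\}$ so that $\ell>c$ on $V(F)$ and $\ell<c$ on $V(P)\setminus V(F)$; this is exactly the hypothesis that $H^{+}$ omits the vertices of $F$ while its interior contains the remaining vertices of $P$. Since no vertex of $P$ lies on $H$, the faces of $P'$ split into three groups: (i) faces $G$ of $P$ all of whose vertices satisfy $\ell<c$, which survive unchanged; (ii) truncations $G\cap H^{+}$ of faces $G$ of $P$ that cross $H$, with $\dim(G\cap H^{+})=\dim G$; and (iii) the faces of the new facet $G_{0}:=P\cap H$, where a $j$-face of $G_{0}$ is $G\cap H$ for a unique $(j+1)$-face $G$ of $P$ crossing $H$. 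Throughout I use the standard fact that, because $H$ avoids every vertex of $P$, the vertices of any section $G\cap H$ are precisely the points $e\cap H$ for edges $e$ of $G$ crossing $H$.

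Parts (a) and (b) are then bookkeeping within this classification. An edge $xy$ of $P$ crosses $H$ iff its endpoints lie on opposite sides, i.e.\ iff exactly one of $x,y$ lies in $V(F)$; this gives (a), the new vertices being the crossing points $w_{xy}$. For (b), the edges of $P'$ are: the $1$-faces of $P$ lying in $\{\ell<c\}$ (type~(1), both endpoints in $V(P)\setminus V(F)$); the truncated crossing edges $[w_{xy},y]$ (type~(2)); and the $1$-faces of $G_{0}$, which by group~(iii) are $G\cap H$ for $2$-faces $G$ of $P$ crossing $H$, joining $w_{x_{1}y_{1}}$ and $w_{x_{2}y_{2}}$ exactly when the two crossing edges lie in a common $2$-face (type~(3)). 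For the determination claim, types~(1) and~(2) are visible in $G(P)$ as the edges with no, respectively exactly one, endpoint in $V(F)$, while each type~(3) edge is recorded by a $2$-face of $P$ containing a vertex of $F$; hence $G(P)$ together with the $2$-faces meeting $F$ suffice.

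For (c), group~(iii) with $j=d-1$ produces the single new facet $G_{0}=P\cap H$, and group~(ii) with $\dim G=d-1$ produces $J':=J\cap H^{+}$ for every facet $J$ of $P$ that crosses $H$; the only facet failing to cross $H$ is $F$ itself when $F$ is a facet (all its vertices have $\ell>c$), which therefore disappears. To recover $J$ from the vertex set of $J'$ by replacing each $w_{xy}$ with $x$, the one point needing argument is that \emph{every} vertex of $V(F)\cap V(J)$ is realised as such an $x$: indeed $F\cap J$ is a proper face of the polytope $J$, and every vertex $x$ of a proper face of a polytope is the endpoint of an edge of that polytope leaving the face, so $x$ has a neighbour $y\in V(J)\setminus V(F)$ and the crossing edge $xy\subseteq J$ contributes $w_{xy}$ to $J'$. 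Thus $V(J)$ is fully recovered, and ranging over all $J\ne F$ yields every facet of $P$.

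Finally, (d) is a local count at $w_{xy}$. Its neighbours in $P'$ are $y$ (the type~(2) edge) together with one type~(3) neighbour for each $2$-face of $P$ containing the edge $xy$. When $y$ is simple its vertex figure is a $(d-1)$-simplex, in which the vertex corresponding to $xy$ meets exactly $d-1$ edges; equivalently, $xy$ lies in exactly $d-1$ two-faces of $P$. Hence $w_{xy}$ has $1+(d-1)=d$ neighbours. I expect the recovery step in~(c)---guaranteeing that no vertex of $F\cap J$ is lost under $w_{xy}\mapsto x$---to be the one assertion genuinely needing a polytope-theoretic input (the existence of an escaping edge at each vertex of a proper face), whereas the remaining statements are organised entirely by the three-group classification above; an alternative, dual route would be to apply \cref{thm:beneath-beyond} to the polar of $P$, under which truncating $F$ becomes stacking a vertex beyond the corresponding facet of $P^{\ast}$.
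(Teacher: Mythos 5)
The paper gives no proof of this lemma at all: it is stated as a collection of standard facts about truncation and justified only by the reference to Br{\o}ndsted \cite{Bro83}. So there is no in-paper argument to compare yours against; what matters is whether your blind proof is sound, and it is. Your classification of the faces of $P\cap H^{+}$ into (i) faces of $P$ strictly below $H$, (ii) truncated crossing faces $G\cap H^{+}$, and (iii) faces of the section $P\cap H$ (each a $j$-face $G\cap H$ for a unique crossing $(j{+}1)$-face $G$) is the canonical route, and you correctly isolate the two places where genuine polytope theory is needed: that every vertex of the proper face $F\cap J$ of $J$ lies on an edge of $J$ leaving $F\cap J$ (so the substitution $w_{xy}\mapsto x$ recovers all of $V(J)$ in (c)), and that an edge through a simple vertex lies in exactly $d-1$ two-faces, via the simplex vertex figure (for (d)). One sentence in your treatment of (c) is literally false but harmless: the facets failing to cross $H$ are not only $F$ --- any facet of $P$ disjoint from $V(F)$ also fails to cross $H$ --- but those are exactly your group (i), they survive unchanged, and they satisfy $J\cap H^{+}=J$, so the stated conclusion is unaffected; you should just rephrase that sentence to say that $F$ is the only facet that disappears.
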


Let $u$ and $v$ be the two nonsimple vertices of $P$.
Our goal now is to find $G(P')$ where $P'$ is the truncation of $P$ at the edge $uv$ in case $uv\in E(P)$, or the truncation of $P$ at $u$ in case $uv\notin E(P)$. Once we succeed in this goal, we are done by Lemma~\ref{lem:truncetion_and_lattices}(d): in the former case  since $P'$ would be simple, and in the later case since $P'$ would have exactly one nonsimple vertex. So in either case we can reconstruct the facets of $P'$ (in polynomial time). Then by Lemma~\ref{lem:truncetion_and_lattices}(c) we reconstruct the facets of $P$ (again in polynomial time).

By Lemma~\ref{lem:truncetion_and_lattices}(a-b), to achieve this goal it is enough to determine all $2$-faces of $P$ containing at least one of $u$ and $v$ (this we do in exponential time); then we can construct $G(P')$ (in polynomial time).

First we determine the $2$-faces of $P$ containing exactly one of $u$ and $v$: each such $2$-face is contained in a facet containing exactly one of $u$ and $v$; those facets we find, for example, by Claims 1 and 2 from the first proof of Theorem~\ref{thm:expRecTwo}. Then we find the relevant $2$-faces in such facet $T$ by reconstructing the face lattice of $T$ from the  subgraph of $G(P)$ induced by $V(T)$, which has at most one vertex of degree $>d-1$.
Next, we aim to determine the $2$-faces of $P$ containing both $u$ and $v$.

\textbf{Case $uv\in E(P)$}. For any $2$-face $S$ containing $uv$ there is a linear functional $l_S$ that orders $V(P)$ with $u$ first, $v$ second and $S$ initial; to achieve this, start with a linear function that attains its minimum over $V(P)$ exactly at $V(S)$ (cf.~\cref{lem:Orientation-F-Initial}), then perturb it so that it is minimised exactly on the edge $vu$, and finally perturb the resulting linear function again so that it is minimised on $u$ only.  Using the original objective function of Kalai $f^O:=\sum_{w\in V(P)}2^{\text{indeg}_O(w)}$, where $O$ is an acyclic orientation, the functionals $l_S$ show, as in Kalai's proof (see \cite{Kal88} or \cite[Sec.~3.4]{Zie95}), that the vertex sets of $2$-faces of $P$ containing $u$ and $v$ are exactly the vertex sets of induced $2$-regular graphs in $G(P)$ containing $u$ and $v$ which are initial w.r.t. some acyclic orientation $O'$ minimising $f^O$, and such that $\text{indeg}_{O'}(u)=0$ and $\text{indeg}_{O'}(v)=1$. Thus, we can construct $G(P')$, where $P'$ is $P$ truncated at $uv$.
 
\textbf{Case $uv\notin E(P)$}. Then there is at most one $2$-face of $P$ containing both $u$ and $v$. Thus, when constructing $G(P')$, with $P'$ being $P$ truncated at $u$, if we know $G(P)$ and the $2$-faces of $P$ containing $u$ and not $v$, then we may miss at most one edge, one of the form $w_{uy_{1}} w_{uy_{2}}$. However, if we missed such edge, as $w_{uy_{1}}$ and $w_{uy_{2}}$ have degree $d$ in $G(P')$, we would be able to recover that edge: simply connect the unique two vertices of degree $d-1$ in $G(P')$ by an edge. To summarise, we can construct $G(P')$ in this case as well, completing the second proof of Theorem~\ref{thm:expRecTwo}.

\section{Concluding remarks}
\label{sec:conclusion}
In this paper we measured the deviation from being a simple polytope by counting the number of nonsimple vertices, which is perhaps the most natural way. Other measures of such deviation were considered or suggested in the literature.  Blind et al.~\cite{BliBli98} thought of an ``almost'' simple polytope as a $d$-polytope having only vertices of degree $d$ or $d+1$, while in \cite[Sec.~8]{Fri09}, Friedman suggested that $d$-polytopes with few nonsimple vertices of degree at most $d+k$ may be considered close to being simple. In terms of reconstruction, Friedman's and Blind's suggestions are too weak. Perles' construction already gives examples of  polytopes which are not combinatorially isomorphic but share the same $(d-3)$-skeleta, having exactly $d$ vertices of degree $d+1$ while the rest of the vertices are simple.
 
The last three authors considered in ~\cite{PinUgoYos16a, PinUgoYos17} yet another measure of deviation from being a simple polytope, the {\it excess}, defined as $\xi(P):=\sum_v (\deg(v)-d)$, where $\deg(v)$ denote the number of edges incident to the vertex $v$. Simple polytopes have excess zero. The paper \cite{PinUgoYos17} then studied reconstructions of polytopes with small excess and of polytopes with a small number of vertices (at most $2d$).

\section{Acknowledgments}
We thank Micha Perles for helpful discussions and the referees for many valuable comments and suggestions.
Guillermo Pineda would like to thank Michael Joswig for the hospitality at the Technical University of Berlin and for many fruitful discussions on the topics of this research.
Joseph Doolittle would like to thank Margaret Bayer for pushing for more results and keeping the direction of exploration straight.


\begin{thebibliography}{10}

\bibitem{AviMor09}
D.~Avis and S.~Moriyama, \emph{On combinatorial properties of linear program
  digraphs}, Polyhedral computation, CRM Proc. Lecture Notes, vol.~48, Amer.
  Math. Soc., Providence, RI, 2009, pp.~1--13. \MR{2503770 (2010h:52014)}

\bibitem{BliBli98}
G.~Blind and R.~Blind, \emph{The almost simple cubical polytopes}, Discrete
  Math. \textbf{184} (1998), no.~1-3, 25--48. \MR{1609343 (99c:52013)}

\bibitem{BliMan87}
R.~Blind and P.~Mani-Levitska, \emph{Puzzles and polytope isomorphisms},
  Aequationes Math. \textbf{34} (1987), no.~2-3, 287--297. \MR{921106
  (89b:52008)}

\bibitem{Bro83}
A.~Br{\o}ndsted, \emph{An introduction to convex polytopes}, Graduate Texts in
  Mathematics, vol.~90, Springer-Verlag, New York, 1983. \MR{683612
  (84d:52009)}

\bibitem{Cou06}
M. ~Courdurier, \emph{On stars and links of shellable polytopal complexes},
  J. Combin. Theory Ser. A \textbf{113} (2006), no.~4, 692--697. \MR{2216461
  (2006m:52022)}

\bibitem{Fri09}
E.~J. Friedman, \emph{Finding a simple polytope from its graph in polynomial
  time}, Discrete Comput. Geom. \textbf{41} (2009), no.~2, 249--256.
  \MR{2471873 (2010e:52034)}

\bibitem{FukMiyMor13}
K.~Fukuda, H.~Miyata, and S.~Moriyama, \emph{Complete enumeration of small
  realizable oriented matroids}, Discrete Comput. Geom. \textbf{49} (2013),
  no.~2, 359--381. \MR{3017917}

\bibitem{GawJos00}
E.~Gawrilow and M.~Joswig, \emph{polymake: a framework for analyzing convex
  polytopes}, Polytopes---combinatorics and computation ({O}berwolfach, 1997),
  DMV Sem., vol.~29, Birkh\"auser, Basel, 2000, pp.~43--73. \MR{1785292}

\bibitem{Gru03}
B.~Gr{{\"u}}nbaum, \emph{Convex polytopes}, 2nd ed., Graduate Texts in
  Mathematics, vol. 221, Springer-Verlag, New York, 2003, Prepared and with a
  preface by V. Kaibel, V. Klee and G. M. Ziegler. \MR{1976856 (2004b:52001)}

\bibitem{Jos00}
M.~Joswig, \emph{Reconstructing a non-simple polytope from its graph},
  Polytopes---combinatorics and computation ({O}berwolfach, 1997), DMV Sem.,
  vol.~29, Birkh\"auser, Basel, 2000, pp.~167--176. \MR{1785298 (2001f:52023)}

\bibitem{JosKaiKor02}
M.~Joswig, V.~Kaibel, and F.~K{\"o}rner, \emph{On the {$k$}-systems of a simple
  polytope}, Israel J. Math. \textbf{129} (2002), 109--117. \MR{1910936
  (2003e:52014)}

\bibitem{Kai03}
Volker Kaibel, \emph{Reconstructing a simple polytope from its graph},
  Combinatorial optimization---{E}ureka, you shrink!, Lecture Notes in Comput.
  Sci., vol. 2570, Springer, Berlin, 2003, pp.~105--118. \MR{2163954
  (2006g:90105)}

\bibitem{Kal88}
G.~Kalai, \emph{A simple way to tell a simple polytope from its graph}, Journal
  of Combinatorial Theory, Series A \textbf{49} (1988), no.~2, 381 -- 383.

\bibitem{McM70a}
P.~McMullen, \emph{The maximum numbers of faces of a convex polytope},
  Mathematika \textbf{17} (1970), 179--184. \MR{0283691 (44 \#921)}

\bibitem{PinProgram}
\url{http://guillermo.com.au/pdfs/ConstructPolytope-A.txt}.

\bibitem{PinUgoYos16a}
G.~Pineda-Villavicencio, J.~Ugon, and D.~Yost, \emph{The excess degree of a
  polytope}, arXiv 1703.10702, 2017.

\bibitem{PinUgoYos17}
\bysame, \emph{Polytopes close to being simple}, arXiv 1704.00854, 2017.

\bibitem{Zie95}
G.~M. Ziegler, \emph{Lectures on polytopes}, Graduate Texts in Mathematics,
  vol. 152, Springer-Verlag, New York, 1995. \MR{1311028 (96a:52011)}

\end{thebibliography}
\end{document}